\documentclass[a4paper,UKenglish,cleveref, autoref, thm-restate, pdfa]{lipics-v2021}

\nolinenumbers
\hideLIPIcs

\graphicspath{{./img/}}

\bibliographystyle{plainurl}

\title{Packing Odd Walks and Trails in Multiterminal Networks}

\author{Maxim Akhmedov\footnote{Corresponding author}}{Moscow State University, Department of Mathematical Logic and Algorithms, Moscow, Russia}{}{https://orcid.org/0000-0002-7947-1416}{}

\author{Maxim Babenko\footnote{Corresponding author}}{Higher School of Economics, Moscow, Russia}{}{https://orcid.org/0000-0003-2877-9725}{}

\authorrunning{M. Akhmedov and M. Babenko}

\Copyright{Maxim Akhmedov and Maxim Babenko}

\ccsdesc[500]{Theory of computation~Network optimization}

\keywords{Odd path, signed and bidirected graph, multiflow, polynomial algorithm}

\EventEditors{Petra Berenbrink, Mamadou Moustapha Kant\'{e}, Patricia Bouyer, and Anuj Dawar}
\EventNoEds{4}
\EventLongTitle{40th International Symposium on Theoretical Aspects of Computer Science (STACS 2023)}
\EventShortTitle{STACS 2023}
\EventAcronym{STACS}
\EventYear{2023}
\EventDate{March 7--9, 2023}
\EventLocation{Hamburg, Germany}
\EventLogo{}
\SeriesVolume{254}
\ArticleNo{26}

\usepackage[usenames,dvipsnames]{xcolor}
\usepackage{mathtools}
\usepackage{calc}
\usepackage{bbm}
\usepackage[color=Dandelion,textsize=footnotesize,colorinlistoftodos,textwidth=\dimexpr\marginparwidth-0.4cm\relax]{todonotes}

\newcommand{\ZZ}{\mathbb{Z}}
\newcommand{\RR}{\mathbb{R}}

\newcommand{\wt}{\widetilde}
\newcommand{\val}[1]{\Vert #1 \Vert}
\newcommand{\1}{\mathbf{1}}
\newcommand{\2}{\mathbf{2}}

\makeatletter
\newcommand*\bigcdot{\mathpalette\bigcdot@{.5}}
\newcommand*\bigcdot@[2]{\mathbin{\vcenter{\hbox{\scalebox{#2}{$\m@th#1\bullet$}}}}}
\makeatother

\begin{document}

\maketitle


\begin{abstract}

Let $G$ be an undirected network with a distinguished set of \emph{terminals} $T \subseteq V(G)$ and edge \emph{capacities} $cap: E(G) \rightarrow \RR_+$. By an \emph{odd} $T$-walk we mean a walk in $G$ (with possible vertex and edge self-intersections) connecting two distinct terminals and consisting of an odd number of edges. Inspired by the work of Schrijver and Seymour on odd path packing for two terminals, we consider packings of odd $T$-walks subject to capacities $cap$.

First, we present a strongly polynomial time algorithm for constructing a maximum fractional packing of odd $T$-walks. For even integer capacities, our algorithm constructs a packing that is half-integer. Additionally, if $cap(\delta(v))$ is divisible by 4 for any $v \in V(G) - T$, our algorithm constructs an integer packing.

Second, we establish and prove the corresponding min-max relation.

Third, if $G$ is inner Eulerian (i.e. degrees of all nodes in $V(G) - T$ are even) and $cap(e) = 2$ for all $e \in E$, we show that there exists an integer packing of odd $T$-trails (i.e. odd $T$-walks with no repeated edges) of the same value as in case of odd $T$-walks, and this packing can be found in polynomial time.

To achieve the above goals, we establish a connection between packings of odd $T$-walks and $T$-trails and certain multiflow problems in undirected and bidirected graphs.
\end{abstract}


\section{Introduction}
\label{sec:intro}

Hereinafter, for graph $G$ we use notation $V(G)$ (resp. $E(G)$) to denote the set of vertices (resp. edges) of~$G$.

Consider an undirected network $G$ with a distinguished set of \emph{terminals} $T \subseteq V(G)$ and edge capacities $cap: E(G) \rightarrow \RR_+$. We use the notions of \textbf{walks} and \textbf{paths}; the former allow arbitrary edge and vertex self-intersections, while the latter forbid any self-intersections. Additionally, we consider \textbf{trails} that allow vertex self-intersections but not edge self-intersections. Any path is a trail and any trail is a walk, but not vice versa. A \textbf{$T$-walk} (resp. \textbf{$T$-trail} or \textbf{$T$-path}) is a walk (resp. trail or path) connecting two distinct vertices in $T$ (note that its intermediate vertices may also be in $T$).

By a (fractional) \textbf{packing} of $T$-walks (resp. $T$-trails, $T$-paths) subject to capacities $cap$ we mean a weighted collection $\mathcal{P} = \{\alpha_1 \cdot W_1, \ldots, \alpha_m \cdot W_m\}$, where $W_i$ are $T$-walks (resp. $T$-trails, $T$-paths) and $\alpha_i \in \RR_+$ are \textbf{weights} such that $\sum_i \alpha_i n_i(e) \le cap(e)$ for any $e \in E(G)$, where $n_i(e) = 0, 1, \ldots$ denotes the number of occurrences of $e$ in $W_i$. If all $\alpha_i$ are integer (resp. $\frac1k$-integer, i.e. become integer after multiplying by $k$) then the whole packing is said to be \textbf{integer} (resp. \textbf{$\frac1k$-integer}). The \textbf{value} of $\mathcal{P}$ (denoted by $\val{\mathcal{P}}$) is $\sum_i \alpha_i$; a packing of maximum value will be referred to as \textbf{maximum}.

If one imposes no additional restrictions, the values of maximum packings of $T$-walks, $T$-trails and $T$-paths coincide; this follows from the fact that any walk can be reduced into a path by removing its cyclic parts. Also for $|T| = 2$ and integer edge capacities the value of a maximum fractional packing equals the value of a maximum integer packing (by the max-flow integrality theorem \cite[Cor.~10.3a]{Sch-03}), while for $|T| \ge 3$ a maximum packing may be half-integer~\cite[Sec.~73.2]{Sch-03}.

Now consider a much harder case where $T$-walks (resp. $T$-trails, $T$-paths) comprising a packing are required to be \textbf{odd}, i.e. to consist of an odd number of edges. Now an attempt to transform a walk into a path (or even a walk into a trail) by a similar decycling approach fails since it may alter the parity.

Our original source of inspiration lies in the work of Schrijver and Seymour \cite{SS-94}, who established a min-max formula for the value of a maximum fractional packing of odd $T$-paths for $T = \{s, t\}$. At its dual side, the formula involves enumerating (not necessarily induced) subgraphs $H$ of $G$ that contain both $s$ and $t$ but no odd $s-t$ path and upper-bounding the value of packings by a certain ``capacity'' of $H$. In a sense, such $H$ is analogous to an $s-t$ cut in the standard max-flow-min-cut-theorem \cite[Th.~10.3]{Sch-03} and is called an \textbf{odd path barrier}.

The above result is established for just $|T| = 2$, only concerns fractional packings and, moreover, is non-constructive. In case of integer capacities one should ultimately aim for a min-max formula and a polynomial algorithm for constructing a maximum integer packing of odd $T$-paths. These questions, unfortunately, seem to be notoriously hard. In particular, \cite[Sec.~3.3]{Yam-16} shows that checking if a given graph contains a pair of edge-disjoint odd $T$-trails is NP-hard already for $|T| = 2$.

\subsection{Our results}
The present paper deals with the \textbf{multiterminal} version of the problem (allowing arbitrary number of terminals $T$) but considers packings of odd $T$-walks and $T$-trails rather than odd $T$-paths.

\textbf{First}, for packings of odd $T$-walks and real-valued capacities we present a polynomial time reduction (\cref{thm:odd_walk_packing}) from a maximum odd $T$-walk packing problem to a maximum multiflow problem for a special commodity graph family due to Karzanov \cite{Kar-94}. For even integer capacities, our algorithm produces a half-integer packing. Also, if capacities are even integers and $cap(\delta(v))$ (which is, as usual, the sum of $cap(e)$ for all edges $e$ incident to $v$) is divisible by 4 for any $v \in V(G) - T$, a maximum packing can be made integer.

\textbf{Second}, we present a min-max formula (\cref{thm:min_max_odd_walk}) for maximum odd $T$-walk packings. It is strikingly similar to the one due to Schrijver and Seymour \cite{SS-94} (for odd $s-t$ paths) and involves, at its dual side, subgraphs of $G$ containing no odd $T$-walks.

\textbf{Third}, we extend the above results to odd $T$-trail packings. Consider the unit-capacity case. Then a Schrijver--Seymour-type min-max relation does not hold for integer packings even if one assumes that $|T| = 2$ and the underlying graph is \textbf{inner Eulerian} (i.e. degrees of all non-terminal verticies are even). An example of such a ``bad'' instance can be found in \cite[Sec.3]{SS-94}. (There it was given for the case of odd $T$-paths rather than odd $T$-trails but it turns the example works in both cases.)

The fractionality status of such a packing problem seems to be open. We partially resolve it by proving that for an inner Eulerian graph with unit capacities and an arbitrary number of terminals an optimum packing of $T$-trails can always be chosen half-integer (and also can be found in polynomial time). If all capacities are multiplied by 2, the inner Eulerianness condition becomes $cap(\delta(v))$ being divisible by 4 for all $v \in V(G) - T$, which is equivalent to the condition from the first result, and our optimum packing becomes integer.

We prove (\cref{thm:odd_trail_packing}) that there exists a packing of odd $T$-trails of the same value as in the case of odd $T$-walks, and this packing can be found in polynomial time. In other words, odd $T$-walks forming a maximum integer packing can always be rearranged (``untangled'') to ensure that none of them has edge self-intersections.

\subsection{Our techniques}
The algorithm that deals with odd $T$-walks is based on a reduction to a certain multiflow problem \cite{Kar-94} and some graph symmetrization. For the min-max formula regarding packing of odd $T$-walks, we indicate how optimum collections of cuts (in the sense of the above multiflow problem) correspond to minimum odd $T$-walk barriers.

The algorithm dealing with odd $T$-trails attracts additional combinatorial ideas. Loosely speaking, it constructs a maximum integer packing consisting of odd $T$-walks~$W_i$. If all of these $T$-walks $W_i$ are already $T$-trails (i.e. have no edge self-intersections), then we are done. Otherwise, for walk $W_i$ with edge self-intersections, we either simplify $W_i$ (while maintaining its parity) or find a \textbf{redundant} edge in $G$ whose removal does not decrease the number of $T$-walks in the current packing, drop it, and repeat. The existence of a redundant edge is proved by a novel characterization of integer odd $T$-walk packings in terms of $T$-trail packings in inner Eulerian \textbf{bidirected} graphs~\cite{BK-07} and relies on the corresponding min-max theorem.

\subsection{Related work}
There is also a solid body of recent research devoted to path packings in unit-capacitated graphs. (Note that here the notions of integer walk and trail packings coincide.)

While even for $T = \{s, t\}$ the problem of finding a maximum integer packing of odd $T$-trails in general networks does not seem to be tractable, a certain lower bound for the maximum value of such packings (relating it to odd $T$-trail covers) is known \cite{IS-17} (also see \cite{CMW-16} for a weaker bound).

Note that if one is interested in packing odd $T$-trails (rather than odd $T$-walks) in graphs with integer capacities larger than~1, then the problem does not seem to be directly reducible to unit capacities. Indeed, splitting each edge and solving the problem in the unit-capacitated case, one will face challenges with edge self-intersections when attempting to return back to the original graph.

These challenges seem to be quite fundamental, and, in particular, we are not aware of any prior art concerning capacitated versions of the maximum odd $T$-trail integral packing problem.  Our algorithm for constructing a maximum packing of odd $T$-trails is able to deal with edge self-intersections by certain $T$-walk ``untangling'' but this battle is not won easily.

Another related (but still substantially different) area of research concerns integer packing of vertex-disjoint $A$-paths in \textbf{group-labeled} graphs. Here each edge $xy \in E$ is endowed with an element $g(x,y)$ of group~$\Gamma$ (obeying $g(x,y) = -g(y,x)$). Path $P$ with both (distinct) ends in $A \subseteq V(G)$ is called a \textbf{non-zero $A$-path} if the sum of all group elements corresponding to (directed) edges of $P$ is non-zero. (This also extends to non-Abelian groups.) In \cite{CGC-08} a polynomial algorithm for constructing a maximum integer packing of vertex-disjoint $A$-paths is given. See also~\cite{Pap-08} for a similar treatment involving permutation groups.

With an appropriate choice of group~$\Gamma$ and edge labels, non-zero $A$-paths may express various well-studied notions, e.g. the much-celebrated Mader's integer packings of vertex-disjoint $\mathcal{S}$-paths \cite{Mad-78}, \cite[Sec.~73.1]{Sch-03}.

Note that if $\Gamma = \ZZ_2$ and $g(x,y) = 1$ for all edges $xy$ one gets the odd parity constraint for paths comprising a packing. The latter motivates adding such $\Gamma$ as a direct group summand in the Mader's case above hoping to capture the parity restriction. This approach, however, will not work as expected: now a path could either be odd \emph{or} connect terminals in distinct $\mathcal{S}$-classes (while we were certainly hoping for paths that simultaneously have ends in distinct $\mathcal{S}$-classes \emph{and} have odd length).


\section{Walks, trails, packings and other notation}

Consider an undirected loopless graph $G$ with possible parallel edges. In this paper we deal with certain families of path-like objects in $G$ differing in kinds of allowed self-intersections. Formally:

\begin{definition}
\label{def:walk_def}
Given $x, y \in V(G)$, an $x-y$ \textbf{walk} is a sequence $W = (e_1, e_2, \dots, e_l)$, where $e_i \in E$ are such that $e_i = v_{i-1} v_{i}$ for $v_0 = x$, $v_l = y$ and $v_1, \dots, v_{l-1} \in V(G)$.

Here $l$ is called the \textbf{length} of $W$. A walk is called \textbf{even} or \textbf{odd} depending on the parity of its length. Vertices $x$ and $y$ are called the \textbf{endpoints} of $W$ and $v_1, \dots, v_{l-1}$ are called \textbf{intermediate} (for $W$).
\end{definition}

Note that some of vertices $v_i$ of $W$ may coincide, allowing a walk to visit the same vertex multiple times and traverse same edge multiple times.

\begin{definition}
An $x-y$ \textbf{trail} (resp. \textbf{path}) is an $x-y$ walk $W$ with all edges (resp. vertices) being distinct. An $x-x$ walk (resp. trail) is called \textbf{cyclic}.
\end{definition}

\begin{definition}
Let $T \subseteq V(G)$ be a distinguished set of vertices called \textbf{terminals}. A \textbf{$T$-walk} (resp. \textbf{$T$-trail}, \textbf{$T$-path}) is an $x-y$ walk (resp. $x-y$ trail, $x-y$ path) for two distinct $x, y \in T$. (Note that unless explicitly stated otherwise, intermediate vertices of such walks are allowed to be terminals.)
\end{definition}

\begin{definition}
Graph $G$ is called \textbf{inner Eulerian with respect to $T$} (or simply \textbf{inner Eulerian} if $T$ is clear from context) if for any $v \in V(G) - T$ the degree of $v$ in $G$ is even.
\end{definition}

\begin{definition}
Given edge capacities $cap: E(G) \rightarrow \RR_+$, a weighted multiset $\mathcal{P} = \{\alpha_1 \cdot W_1, \dots, \alpha_m \cdot W_m\}$, where $\alpha_i \in \RR_+$ are \textbf{weights} and each $W_i$ is a walk, is said to be a (fractional) \textbf{walk packing} if for any $e \in E(G)$ the \textbf{load} $\mathcal{P}(e) := \sum_i \alpha_i n_i(e)$ of edge $e$ does not exceed $cap(e)$, where $n_i(e) = 0, 1, \ldots$ is the number of occurrences of $e$ in $W_i$.

$\val{\mathcal{P}} := \sum_i \alpha_i$ is called the \textbf{value} of $\mathcal{P}$. If all $\alpha_i \in \ZZ_+$ then $\mathcal{P}$ is called \textbf{integer}.

If $\mathcal{P}, \mathcal{Q}$ are packings and $\alpha \in \RR_+$, $\mathcal{P} + \mathcal{Q}$ denotes a union of weighted multisets and $\alpha \cdot \mathcal{P}$ denotes the result of multiplying all weights in $\mathcal{P}$ by $\alpha$.
\end{definition}

When walks comprising a packing are restricted in some way, the analogous terminology is applied to the packing as a whole. In particular, one may speak of $T$-walk (resp. $T$-trail, $T$-path) packings $\mathcal{P}$ indicating that walks in $\mathcal{P}$ are, in fact, $T$-walks (resp. $T$-trails, $T$-paths).

\begin{definition}
A triple $(G, T, cap)$ consisting of an undirected graph $G$, terminal set $T \subseteq V(G)$ and capacity function $cap: E(G) \rightarrow \RR_+$, is called a \textbf{network}.
\end{definition}

Two notable special cases of constant capacity function to appear throughout our paper are $\1(e) := 1$ and $\2(e) := 2$ for any $e \in E(G)$.

\begin{definition}
Consider network $N = (G, T, cap)$ together with undirected graph $H$ such that $V(H) = T$ (called the \textbf{commodity graph}). A \textbf{multi-commodity flow} (or simply a \textbf{multiflow}) in network~$N$ with commodity graph~$H$ is a $T$-walk packing $\mathcal{P}$ such that for any $T$-walk $W$ in $\mathcal{P}$ its (distinct) endpoints are connected by an edge in $H$.
\end{definition}

We also employ the following graph-theoretic notation:
\begin{definition}
\begin{itemize}
\item Given graph $G$ and $A \subseteq V(G), v \in V(G)$, $\delta(v)$ denotes the set of edges incident to $v$, $\delta(A)$ denotes the set of edges with exactly one endpoint in $A$ and $\gamma(A)$ denotes the set of edges with both endpoints in $A$;
\item For function $f: X \rightarrow \RR$ and $Y \subseteq X$, $f(Y)$ is defined as $\sum_{x \in Y} f(x)$; e.g. for a set of vertices $A$, $f(\delta(A))$ is the total value of $f$ over all edges with exactly one endpoint in $A$;
\item Given edge capacities $cap \colon E(G) \to \RR_+$ in graph $G$ and $S, T \subseteq V(G), S \cap T = \varnothing$, an $S-T$ \textbf{cut} is a vertex set $C$ such that $S \subseteq C \subseteq V(G) - T$; the \textbf{capacity} of cut $C$ is $cap(\delta(C))$; the minimum capacity of an $S-T$ cut is denoted by $\lambda(S, T)$;
\item When graph $G$ is not clear from the context, it is specified explicitly, e.g. $\delta_G$, $\gamma_G$ and $\lambda_G$.
\end{itemize}
\end{definition}


\section{Odd $T$-walk packing algorithm}
\label{sec:symmetric_graph}

Let $(G, T, cap)$ be a network. In this section we introduce an auxiliary network $(\wt{G}, \wt{T}, \wt{cap})$ constructed from $G$ and employ it to provide a strongly polynomial time algorithm for finding a maximum odd $T$-walk packing. This network also plays a crucial role in further sections.

Construct graph $\wt{G}$ with $V(\wt{G}) := V(G) \sqcup V(G)'$, where $V(G)'$ is a disjoint copy of $V(G)$, i.e. each vertex $v \in V(G)$ has its own copy $v' \in V(G)'$, and $E(\wt{G}) := \{u'v, uv' \mid uv \in E(G)\}$. Also, let $v'' := v$ for $v \in V(G)$. If $x, y \in V(G)$, vertices $x$ and $x'$ are called \textbf{symmetric} to each other, and similarly for edges $xy'$ and $x'y$. For a vertex set (resp. an edge set or a walk) $X$, let $X'$ be the vertex set (resp. edge set or walk) consisting of vertices (or edges) symmetric to ones in $X$. Let $\wt{T} := T \sqcup T'$. Finally, define capacities on edges of $\wt{G}$ as $\wt{cap}(uv') := \wt{cap}(u'v) := \frac{1}{2} cap(uv)$ for any $uv \in E(G)$.

The following theorem encapsulates the first of our results announced in \cref{sec:intro}.

\begin{theorem}[Odd $T$-walk packing]
\label{thm:odd_walk_packing}
Given network $(G, T, cap)$, it is possible to construct a maximum fractional odd $T$-walk packing $\mathcal{P}$ in $(G, T, cap)$ in strongly polynomial time.

If all capacities are non-negative even integers, the resulting $\mathcal{P}$ is half-integer. If additionally $cap(\delta(v))$ is divisible by $4$ for all $v \in V(G) - T$, $\mathcal{P}$ is integer.
\end{theorem}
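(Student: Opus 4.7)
My approach is to reduce the problem to a maximum multiflow in the bipartite double cover $(\wt{G}, \wt{T}, \wt{cap})$ defined immediately above, and then to invoke a polynomial-time multiflow algorithm from Karzanov's family \cite{Kar-94}. The crucial observation is that $\wt{G}$ is bipartite with parts $V(G)$ and $V(G)'$, so every walk in $\wt{G}$ of odd length starting at an unprimed vertex $x$ ends at a primed vertex $y'$; projecting via $v'':=v$ yields an odd walk from $x$ to $y$ in $G$. Conversely, each odd $T$-walk $W$ in $G$ admits two natural lifts in $\wt{G}$, symmetric under the involution $v \leftrightarrow v'$.

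With commodity graph $H$ on $\wt{T} = T \sqcup T'$ having edges $\{t_i t_j' : t_i, t_j \in T,\; i \ne j\}$ (that is, $K_{T,T'}$ minus the ``diagonal'' matching $M := \{tt' : t \in T\}$), I would show that the map sending each $\alpha_i \cdot W_i$ to the two lifts weighted by $\alpha_i/2$ each converts odd $T$-walk packings in $(G, T, cap)$ into $H$-multiflows in $(\wt{G}, \wt{T}, \wt{cap})$ of equal value, since each $G$-edge $uv$ traversed by $W_i$ contributes equally to $\wt{G}$-edges $uv'$ and $u'v$, and $\wt{cap}(uv') = \wt{cap}(u'v) = \tfrac{1}{2} cap(uv)$. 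Projection is equally value-preserving: the exclusion of the edges of $M$ from $H$ is exactly what guarantees that the projected walk has distinct endpoints in $G$, and $\mathcal{P}(uv) = \wt{\mathcal{P}}(uv') + \wt{\mathcal{P}}(u'v) \le cap(uv)$. Hence the two optima coincide.

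The commodity graph $K_{T,T'} - M$ lies in Karzanov's \cite{Kar-94} family of polynomially solvable multiflow demands: he supplies a strongly polynomial algorithm for the maximum $H$-multiflow together with an integrality theorem saying that if the host network has integer capacities and is inner Eulerian (even capacitated degrees at all non-terminals), then a maximum $H$-multiflow can be chosen integer. For $\wt{G}$ the inner-Eulerianness requirement becomes $\wt{cap}(\delta_{\wt{G}}(v)) = \tfrac{1}{2} cap(\delta_G(v))$ being even, i.e.\ $cap(\delta_G(v))$ divisible by $4$ for each $v \in V(G) - T$; this yields the integer packing. When $cap$ is only assumed to be even-integer, one instead applies Karzanov's algorithm to the scaled network $(\wt{G}, \wt{T}, 2\wt{cap})$, extracts an integer multiflow, and halves the weights to obtain the desired half-integer packing in $G$.

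The main obstacle I foresee is confirming that the specific commodity graph $K_{T,T'} - M$ indeed lies in Karzanov's tractable class so that his algorithm and (half-)integrality results apply off the shelf, and then checking the arithmetic of the Eulerianness conditions carefully so that both integrality claims of the theorem transfer cleanly between $G$ and $\wt{G}$ without spurious factor-of-$2$ losses. A secondary concern is ensuring that the projected maximum multiflow really witnesses the maximum on the $G$-side (optimality in both directions), which follows from the value-preservation of both the lift and the projection demonstrated above.
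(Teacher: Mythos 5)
Your proposal follows essentially the same route as the paper: the bipartite double cover with halved capacities, the commodity graph $K_{T,T'}$ minus the diagonal matching, the symmetrized half-weight lift and the value-preserving projection, and Karzanov's multiflow theorem, including the scaling-by-2 argument for the half-integer case and the observation that divisibility of $cap(\delta(v))$ by $4$ is exactly inner Eulerianness of $(\wt{G},\wt{T},\wt{cap})$. The one point you flag as an obstacle is settled just as you hope: the maximal anticliques of $K_{T,T'}$ minus a perfect matching are precisely $T$, $T'$ and the pairs $\{t,t'\}$, which split into the two families $\mathcal{A}_1=\{T,T'\}$ and $\mathcal{A}_2=\{\{t,t'\}\mid t\in T\}$ of pairwise disjoint anticliques, so the commodity graph lies in Karzanov's tractable class and his algorithm and integrality statement apply off the shelf.
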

\begin{proof}
Note that $\wt{G}$ is bipartite, so for distinct $x, y \in T$ any $x-y'$ walk in $\wt{G}$ is odd and corresponds to an odd $x-y$ walk in $G$.

Construct commodity graph $H_T$ as follows: $V(H_T) := \wt{T}$ and $E(H_T) := \{t_i t'_j \mid i \neq j\}$. Note that $H_T$ is isomorphic to $K_{|T|,|T|}$ without a perfect matching (see \cref{fig:commodity_graph}).

Consider an arbitrary fractional odd $T$-walk packing $\mathcal{P} = \{\alpha_1 \cdot W_1, \ldots, \alpha_m \cdot W_m\}$ in $(G, T, cap)$ of value $p$. Denote endpoints of $W_k$ as $t_{k,1}$ and $t_{k,2}$; this walk corresponds to a pair of $t_{k,1}-t'_{k,2}$ walk $\wt{W}_k$ and $t'_{k,1}-t_{k,2}$ walk $\wt{W}'_k$ in $\wt{G}$, which are symmetric to each other. Packing $\wt{\mathcal{P}} := \{\alpha_1 \cdot \wt{W}_1, \dots, \alpha_m \cdot \wt{W}_m\}$ in $(\wt{G}, \wt{T}, \wt{cap})$ is of value $p$. For any $xy \in E(G)$ holds $\wt{\mathcal{P}}(xy') + \wt{\mathcal{P}}(x'y) = \mathcal{P}(xy)$ since each occurrence of $xy$ in some walk $W_k$ in $\mathcal{P}$ corresponds to exactly one occurrence of either $x'y$ or $xy'$ in $\wt{W_k}$. The same properties hold for packing $\wt{\mathcal{P}}' = \{\alpha_1 \cdot \wt{W}'_1, \dots, \alpha_m  \cdot \wt{W}'_m\}$, which is the symmetric counterpart of $\wt{\mathcal{P}}$.

Construct $\mathcal{Q} := \frac{1}{2}(\wt{\mathcal{P}} + \wt{\mathcal{P}}')$; the value of $\mathcal{Q}$ is also $p$. For any $xy \in E(G)$ holds $\wt{\mathcal{P}}'(xy') = \wt{\mathcal{P}}(x'y)$, thus $\mathcal{Q}(xy') = \frac{1}{2}(\wt{\mathcal{P}}(xy') + \wt{\mathcal{P}}'(xy')) = \frac{1}{2}(\wt{\mathcal{P}}(xy') + \wt{\mathcal{P}}(x'y)) = \frac{1}{2} \mathcal{P}(xy) \leq \frac{1}{2} cap(xy) = \wt{cap}(xy')$, i.e. $\mathcal{Q}$ is a (self-symmetric) multiflow in $(\wt{G}, \wt{T}, \wt{cap})$ with commodity graph $H_T$. Thus $p$ does not exceed the value of a maximum fractional multiflow in $(\wt{G}, \wt{T}, \wt{cap})$ with commodity graph~$H_T$.

Conversely, consider a fractional multiflow $\mathcal{Q}$ of value $q$ in network $(\wt{G}, \wt{T}, \wt{cap})$ with commodity graph~$H_T$. Construct an odd $T$-walk packing $\mathcal{P}$ of value $q$ in $(G, T, cap)$ by taking preimages of all weighted walks in $\mathcal{Q}$ with their respective weights. Clearly, for $xy \in E(G)$ holds $\mathcal{P}(xy) = \mathcal{Q}(xy') + \mathcal{Q}(x'y) \leq \wt{cap}(xy') + \wt{cap}(x'y) = cap(xy)$. Thus, $q$ does not exceed the value of a maximum fractional odd $T$-walk packing in $(G, T, cap)$.

Therefore, the maximum value of a fractional odd $T$-walk packing in $(G, T, cap)$ equals the value of a maximum fractional multiflow in $(\wt{G}, \wt{T}, \wt{cap})$ with commodity graph $H_T$. To conclude the proof, we utilize the following result due to Karzanov \cite{Kar-94}:
\begin{theorem}
\label{thm:karzanov_anticliques}
Let $(G, T, cap)$ be a network with commodity graph $H$. Denote by $\mathcal{A}$ the family of all inclusion-wise maximal anticliques (i.e. independent sets) in $H$. Suppose $\mathcal{A}$ can be split into two subfamilies $\mathcal{A}_1, \mathcal{A}_2$ such that all anticliques in each family $\mathcal{A}_i$ are pairwise disjoint.

Then a maximum multiflow in $(G,T,cap)$ can be found in strongly polynomial time. If, additionally, $cap$ are integers and $cap(\delta(v))$ is even for any $v \in V(G) - T$, then the resulting multiflow is integer.
\end{theorem}

\begin{figure}
\centering
\includegraphics{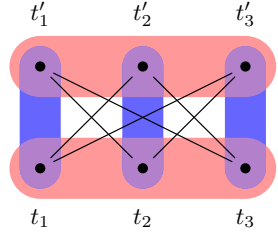}
\caption{Commodity graph $H_T$; anticlique family $\mathcal{A}_1$ is in red and $\mathcal{A}_2$ is in blue.}
\label{fig:commodity_graph}
\end{figure}

Note that the family of anticliques in $H_T$ obeys the property from \cref{thm:karzanov_anticliques}. Indeed, define $\mathcal{A}_1 := \{T, T'\}$ and $\mathcal{A}_2 := \{\{t, t'\} \mid t \in T\}$ (see \cref{fig:commodity_graph}). If some maximal anticlique contains a terminal and its symmetric copy, then it must belong to $\mathcal{A}_2$; otherwise it cannot contain both a vertex from $T$ and a vertex from $T'$, thus it belongs to $\mathcal{A}_1$. Also, if $cap(\delta(v))$ is divisble by~4 for any $v \in V(G) - T$, then $\wt{cap}(\delta(v))$ is even for any $v \in V(\wt{G}) - \wt{T}$. Therefore, applying \cref{thm:karzanov_anticliques} finishes the proof.
\end{proof}


\section{Odd \texorpdfstring{$T$}{T}-walk barrier}
\label{sec:odd_barrier}

In this section we provide a combinatorial description of barrier structure that defines a tight upper bound for the value of a maximum odd $T$-walk packing, which is our second result announced in \cref{sec:intro}. This characterization is surprisingly similar to the corresponding barrier structure for maximum odd $s-t$ path packings due to Schrijver and Seymour~\cite{SS-94}. A strong duality is proven using the equivalence with multiflows from \cref{sec:symmetric_graph}.

\begin{definition}
Given network $(G, T, cap)$, a (not necessarily induced) subgraph $B$ of $G$ with $T \subseteq V(B)$ is called an \textbf{odd $T$-walk barrier} if there is no odd $T$-walk in $B$.

The \textbf{capacity} $cap(B)$ of barrier $B$ is defined as $\frac{1}{2} cap(I(B)) + cap(U(B))$, where for an arbitrary (not necessarily induced) subgraph $H$ of $G$ we use the following notation:
\begin{itemize}
\item $I(H) := \{xy \in E(G) \mid xy \in \delta_G(V(H))\}$ (informally, the edge leaves $H$ and does not return);
\item $U(H) := \{xy \in E(G) \mid x, y \in V(H),\,xy \in E(G) - E(H)\}$ (informally, the edge takes a U-turn by leaving $H$ and immediately returning back).
\end{itemize}

\end{definition}

\begin{figure}
\centering
\includegraphics{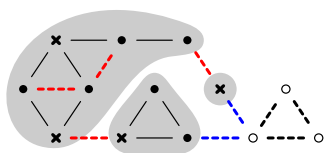}
\caption{An example of an odd $T$-walk barrier $B$; vertices in $T$ are crosses, vertices in $V(B) - T$ are black dots, vertices not in $V(B)$ are white dots; edges in $E(B)$ are solid, edges not in $E(B)$ are dashed; edges in $I(B)$ are blue, edges in $U(B)$ are red.}
\label{fig:barrier}
\end{figure}

It is easy to verify that the capacity of any barrier $B$ is an upper bound for the value of any odd $T$-walk packing $\mathcal{P}$. Indeed, any odd $T$-walk $W$ endowed with weight $\alpha$ in $\mathcal{P}$ is not entirely contained in~$B$; thus it either visits some vertex $v \in V(G) - V(B)$ or traverses some edge $xy \in E(G) - E(B)$ such that $x, y \in V(B)$. In the former case it reserves $\alpha$ units of capacity of at least two edges in $I(B)$, and in the latter case it reserves $\alpha$ units of capacity of at least one edge in $U(B)$. Therefore $\val{\mathcal{P}} \le cap(B)$. The strong duality also holds:

\begin{theorem}[see Appendix]
\label{thm:min_max_odd_walk}
Let $(G, T, cap)$ be a network. If $\mathcal{P}$ ranges over odd $T$-walk packings and $B$ ranges over odd $T$-walk barriers, then $\max\limits_{\mathcal{P}} \val{\mathcal{P}} = \min\limits_{B} cap(B)$.
\end{theorem}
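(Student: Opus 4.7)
The plan is to derive the nontrivial inequality $\min_B cap(B) \le \max_{\mathcal P} \val{\mathcal P}$ via the multiflow reduction of \cref{sec:symmetric_graph}. By \cref{thm:odd_walk_packing}, the maximum odd $T$-walk packing value in $(G, T, cap)$ coincides with the maximum fractional multiflow value in $(\wt G, \wt T, \wt{cap})$ with commodity graph $H_T$. Standard LP duality expresses this common value as the minimum of $\sum_{\tilde e} \wt{cap}(\tilde e)\, \ell(\tilde e)$ over nonnegative metrics $\ell$ on $E(\wt G)$ satisfying $\ell(t, s) \ge 1$ for every commodity edge $ts \in E(H_T)$; by the anticlique bipartition condition of \cref{thm:karzanov_anticliques}, Karzanov's framework moreover guarantees a half-integer optimum $\ell^\star$ arising as a nonnegative combination of cut metrics $\mu_U$ for cuts $U$ compatible with the split $\mathcal A_1 \cup \mathcal A_2$.

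To pull this dual back to $G$, I would use the reflection $v \leftrightarrow v'$, which preserves $\wt{cap}$, $\wt T$, and each anticlique of $\mathcal A_1 \cup \mathcal A_2$. Averaging $\ell^\star$ with its reflected version yields a self-symmetric half-integer optimum with $\ell^\star(xy') = \ell^\star(x'y)$, and I set $y_{xy} := \ell^\star(xy') \in \{0, \tfrac12, 1\}$. Each odd $T$-walk $W$ in $G$ lifts to a $\wt G$-walk joining a commodity pair, so dual feasibility gives $\sum_e y_e\, n_W(e) \ge 1$, while $\sum_e cap(e)\, y_e$ matches the multiflow LP minimum and hence the maximum packing value. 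The self-symmetric cuts underlying $\ell^\star$ encode a $2$-coloring $\sigma \colon V(G) \to \{0,1\}$ with $\sigma|_T \equiv 0$ (from $\mathcal A_1$) and a partition $\{R_0, R_1, \dots, R_k\}$ of $V(G)$ with $t_i \in R_i$ for $i \ge 1$ (from $\mathcal A_2$), and one can read off $y_{xy} = \tfrac12(\mathbf{1}[\sigma(x)=\sigma(y)] + \mathbf{1}[x, y \text{ lie in distinct }R\text{-classes}])$.

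I would then assemble the barrier by setting $V(B) := R_1 \sqcup \dots \sqcup R_k$ and $E(B) := \{xy \in \gamma(V(B)) : y_{xy} = 0\}$. That $T \subseteq V(B)$ is immediate, and any odd $T$-walk inside $B$ would use only $y_e = 0$ edges, contradicting $\sum_e y_e\, n_W(e) \ge 1$. The delicate step, which I expect to carry the bulk of the work, is the capacity equality $cap(B) = \sum_e y_e\, cap(e)$, reduced to a case analysis on the $(\sigma, R)$-type of each edge that shows $I(B) \subseteq \{y_e = \tfrac12\}$ and $U(B) \supseteq \{y_e = 1\}$. The naive $V(B)$ can fail on boundary cases (for instance a $\sigma$-monochromatic edge between $R_i$ and $R_0$ has $y = 1$ yet lies in $\delta(V(B))$), so the appendix will likely either invoke complementary slackness to refine the Karzanov cuts so that such boundary edges vanish, or sharpen $V(B)$ with an additional $\sigma$-based filter ensuring that every edge contributes to $cap(B)$ with weight exactly $y_e\, cap(e)$. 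Once this accounting is aligned, $cap(B) = \sum_e y_e\, cap(e) = \max_{\mathcal P} \val{\mathcal P}$; combined with the weak duality already proved, this establishes \cref{thm:min_max_odd_walk}.
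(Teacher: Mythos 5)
Your outline follows the same route as the paper (reduce via \cref{thm:odd_walk_packing} to a maximum multiflow in $(\wt{G},\wt{T},\wt{cap})$ with commodity graph $H_T$, take Karzanov's dual, pull it back through the symmetric double cover, and read off a barrier), but the two places where you defer the work are exactly where the real content lies, and as written they do not go through. First, the dual structure you posit is not what Karzanov provides. \cref{thm:min_max_multicommodity} yields a \emph{proper partition} of $\wt{T}$ into independent sets of $H_T$ (so each part is a subset of $T$, a subset of $T'$, or a single pair $\{t,t'\}$) with one minimum cut per part; it does not give simultaneously a $T$-versus-$T'$ cut (your global $\sigma$ with $\sigma|_T\equiv 0$) \emph{and} a cut around every pair $\{t,t'\}$ (your $R_1,\dots,R_k$ with $t_i\in R_i$ for all $i$). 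Moreover your formula $y_{xy}=\frac12\bigl(\mathbf{1}[\sigma(x)=\sigma(y)]+\mathbf{1}[x,y\text{ in distinct }R\text{-classes}]\bigr)$ cannot reproduce the pulled-back dual in general: in the canonical optimum an edge with both ends on one side of the bipartite (non-singular) piece carries weight $1$, which your formula produces only if that edge also crosses the $R$-partition, and an edge crossing two distinct region boundaries should be charged $\frac12+\frac12$, not $\frac12$. Averaging $\ell^\star$ with its reflection makes the \emph{metric} symmetric, but it does not make the underlying cuts symmetric, disjoint, or of the special two-type form; the paper spends two submodularity lemmas precisely on this normalization (merging parts lying in the same maximal anticlique, then symmetrizing and disjointifying the cuts), arriving at: singular parts $\{t,t'\}$ with self-symmetric pairwise disjoint cuts, plus at most one symmetric non-singular pair $X,X'$ with cuts $L\sqcup R'$ and $L'\sqcup R$.

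Second, the capacity accounting, which you flag as delicate, actually fails for your $B$ and in the harmful direction. With $V(B)=R_1\sqcup\dots\sqcup R_k$ and $E(B)=\{e\in\gamma(V(B)):y_e=0\}$, an edge joining two distinct terminal regions whose endpoints receive different $\sigma$-colours has $y_e=\frac12$ under your formula yet lies in $U(B)$, so it is charged $cap(e)$ in $cap(B)$ while the dual pays only $\frac12 cap(e)$; thus $cap(B)\le\sum_e y_e\,cap(e)$ need not hold (the boundary case you mention, an $I(B)$ edge with $y_e=1$, is the harmless direction). The inclusions you intend to prove are also the wrong way around: what is needed is that \emph{every} edge of $U(B)$ has $y_e=1$ and every edge of $I(B)$ has $y_e\ge\frac12$, not $U(B)\supseteq\{y_e=1\}$. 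The ``complementary slackness refinement'' you invoke is precisely the missing argument; in the paper it is supplied by the normalization lemmas above, after which the barrier is assembled per part (the induced subgraph on the pre-image of each singular self-symmetric cut, and only the $L$--$R$ edges of the bipartite piece for the non-singular pair) and the per-edge accounting is verified directly, as in \cref{lem:min_max_leq} and \cref{lem:min_max_geq}. Until you either establish your posited dual form or rebuild the barrier from the genuine proper-partition structure, the inequality $\min_B cap(B)\le\max_{\mathcal{P}}\val{\mathcal{P}}$ remains unproved.
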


The min-max formula above enables strengthening the statement of \cref{thm:odd_walk_packing} as follows.

\begin{corollary}
Given network $(G, T, cap)$, let $\mathcal{P}$ be a maximum fractional odd $T$-walk packing in $(G, T, cap)$. If all capacities are non-negative even integers, $\val{\mathcal{P}}$ is integer. If additionally $cap(\delta(v))$ is divisible by 4 for all $v \in V(G) - T$, $\val{\mathcal{P}}$ is even integer.
\end{corollary}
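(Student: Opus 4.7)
The plan is to derive both claims from the min-max formula in \cref{thm:min_max_odd_walk}, namely $\val{\mathcal{P}} = \min_B cap(B)$, by showing that under the hypotheses every odd $T$-walk barrier $B$ automatically has integer (respectively even integer) capacity, so the minimum must inherit this property.

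For the first part, I would simply unpack the definition $cap(B) = \frac{1}{2} cap(I(B)) + cap(U(B))$. Since every $cap(e)$ is an even integer, the sum $cap(I(B))$ is a sum of even integers and hence itself even, so $\frac{1}{2} cap(I(B))$ is integer; and $cap(U(B))$ is already integer. Thus $cap(B) \in \ZZ_+$ for every barrier, and consequently $\val{\mathcal{P}} = \min_B cap(B) \in \ZZ_+$.

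For the second part, I would work modulo $4$ with $2 \cdot cap(B) = cap(I(B)) + 2 \cdot cap(U(B))$. Since each $cap(e)$ is even, $2 \cdot cap(e) \equiv 0 \pmod 4$ for every $e$, so $2 \cdot cap(U(B)) \equiv 0 \pmod 4$ and the task reduces to showing $cap(I(B)) \equiv 0 \pmod 4$. Setting $A := V(B) \supseteq T$, the handshake identity yields
\[
cap(I(B)) = cap(\delta_G(A)) = \sum_{v \in A} cap(\delta_G(v)) - 2 \cdot cap(\gamma_G(A)).
\]
Here $2 \cdot cap(\gamma_G(A)) \equiv 0 \pmod 4$ (the factor $2$ plus evenness of each $cap(e)$), and $\sum_{v \in A - T} cap(\delta_G(v)) \equiv 0 \pmod 4$ by the hypothesis on non-terminal vertices. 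The remaining piece is $\sum_{v \in T} cap(\delta_G(v))$, which is the step that initially looks problematic because no direct divisibility assumption is placed on terminals.

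The key trick, which is the only non-routine point, is to recover the missing congruence globally: $\sum_{v \in V(G)} cap(\delta_G(v)) = 2 \cdot cap(E(G))$, and $cap(E(G))$ is a sum of even integers, so the total is divisible by $4$. Subtracting off $\sum_{v \in V(G) - T} cap(\delta_G(v))$, which is divisible by $4$ by hypothesis, shows $\sum_{v \in T} cap(\delta_G(v)) \equiv 0 \pmod 4$ as well. Combining these congruences gives $cap(I(B)) \equiv 0 \pmod 4$, hence $cap(B)$ is an even integer for every barrier, and finally $\val{\mathcal{P}} = \min_B cap(B)$ is an even integer. No algorithmic ingredient or structural argument about $\mathcal{P}$ itself is needed beyond the duality from \cref{thm:min_max_odd_walk}.
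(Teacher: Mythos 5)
Your proof is correct and follows essentially the same route as the paper: both parts rest on \cref{thm:min_max_odd_walk} plus a mod-$4$ handshake count showing $cap(I(B)) \equiv 0 \pmod 4$. The only difference is that the paper sums degrees over the complement $A := V(G) - V(B)$ (which satisfies $\delta(A) = I(B)$ and lies entirely in $V(G) - T$), so the divisibility hypothesis applies to every vertex involved and your extra global step establishing $\sum_{t \in T} cap(\delta_G(t)) \equiv 0 \pmod 4$ becomes unnecessary; your version is nonetheless valid, since that congruence does follow from $\sum_{v} cap(\delta_G(v)) = 2\,cap(E(G)) \equiv 0 \pmod 4$ as you argue.
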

\begin{proof}
By \cref{thm:min_max_odd_walk}, $\val{\mathcal{P}} = \frac{1}{2}cap(I(B)) + cap(U(B))$ for minimum odd $T$-walk barrier $B$. If all capacities are even integers, then $cap(U(B))$ and $cap(I(B))$ are also even, therefore the first part of the statement is trivial. Let $A := V(G) - V(B)$, note that $\delta(A) = I(B)$. Under the second condition, note the following congruence:
$$0 \equiv \sum\limits_{\mathclap{v \in A}} cap(\delta(v)) \equiv 2 cap(\gamma(A)) + cap(\delta(A)) \equiv cap(I(B)) \pmod 4$$
Therefore, $\frac{1}{2}cap(I(B))$ is also an even integer.
\end{proof}


\section{Odd $T$-trail packing algorithm}

Hereinafter we focus on network $(G, T, \2)$ for an inner Eulerian graph~$G$. Since all capacities are 2, each edge can be traversed by at most two walks in an integer packing. Our ultimate goal is to construct a maximum integer $T$-trail packing. The third result announced in \cref{sec:intro} is as follows:

\begin{theorem}
\label{thm:odd_trail_packing}
Given network $(G, T, \2)$ with inner Eulerian $G$, it is possible to construct a maximum integer packing of odd $T$-trails in polynomial time. This packing is also a maximum fractional packing of odd $T$-walks in $(G, T, \2)$.
\end{theorem}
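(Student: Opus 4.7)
The plan proceeds in three phases: initialization, iterative untangling, and an appeal to a structural lemma about redundant edges. First I would observe that every $T$-trail is a $T$-walk, so an integer odd $T$-trail packing cannot exceed the maximum odd $T$-walk packing value $\nu$; it therefore suffices to construct an integer odd $T$-trail packing of value $\nu$, after which the second sentence of the theorem is automatic. Since $G$ is inner Eulerian and $cap\equiv\2$, we have $cap(\delta(v))=2\deg_G(v)\equiv 0\pmod{4}$ for every $v\in V(G)-T$, so \cref{thm:odd_walk_packing} produces in polynomial time an integer maximum odd $T$-walk packing $\mathcal{P}$ of value $\nu$.

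The main loop maintains a subgraph $G'\subseteq G$ (initially $G$) and an integer maximum odd $T$-walk packing $\mathcal{P}$ in $(G',T,\2)$ of value $\nu$. While some walk $W\in\mathcal{P}$ uses some edge $e=xy$ twice, I would decompose $W=W_1\cdot e\cdot W_2\cdot e\cdot W_3$ and attempt a parity-preserving local surgery. If the two traversals of $e$ are in the same direction, then $W_2$ goes from $y$ to $x$, and replacing $W$ with $W_1\cdot W_2^{-1}\cdot W_3$ preserves endpoints and length parity while eliminating both uses of $e$. If the traversals are in opposite directions, then $W_2$ is closed and the replacement $W\mapsto W_1\cdot W_3$ preserves parity precisely when $|W_2|$ is even; apply it in that case. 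When neither simplification applies (a U-turn at $e$ with odd closed middle), fall through to the structural step.

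The structural step identifies a \emph{redundant} edge $e^\star\in E(G')$, meaning an edge whose deletion does not decrease the maximum odd $T$-walk packing value of $(G',T,\2)$. I would delete $e^\star$, reapply \cref{thm:odd_walk_packing} to obtain a fresh integer maximum packing in $(G'-e^\star,T,\2)$ of value $\nu$, and resume the main loop. A standard potential argument, in which each iteration strictly decreases either $\sum_{W\in\mathcal{P}}|\{e:e\text{ used twice by }W\}|$ or $|E(G')|$, bounds the number of iterations polynomially; when the loop exits, every $W\in\mathcal{P}$ is a trail, yielding the required integer odd $T$-trail packing of value $\nu$.

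The hard part, as flagged in \cref{sec:intro}, is establishing existence of a redundant edge whenever the local untangling is stuck. My plan here is to encode integer odd $T$-walk packings in $(G',T,\2)$ as integer $T$-trail packings in an inner Eulerian \emph{bidirected} graph obtained from $G'$ via a symmetrization analogous to that of \cref{sec:symmetric_graph}, together with an involution identifying symmetric pairs (cf.~\cite{BK-07}); under this encoding, the stubborn odd U-turn at $e$ corresponds to a two-fold use of a bidirected edge. The min-max theorem for bidirected $T$-trail packings, together with \cref{thm:min_max_odd_walk} and a complementary-slackness analysis against a minimum odd $T$-walk barrier $B$ of capacity $\nu$, should then exhibit an edge $e^\star$ that is not forced by any tight barrier and is therefore redundant. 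Setting up this bidirected reformulation so that the stuck configurations are exactly the ones producing such an $e^\star$ is where I expect the main obstacle to lie; everything else reduces to careful bookkeeping around \cref{thm:odd_walk_packing} and \cref{thm:min_max_odd_walk}.
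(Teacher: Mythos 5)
Your initialization and local untangling are sound and match the easy parts of the paper's argument: since $G$ is inner Eulerian, $cap(\delta(v))=2\deg_G(v)\equiv 0\pmod 4$ for $v\in V(G)-T$, so \cref{thm:odd_walk_packing} does give an integer maximum odd $T$-walk packing, and your two parity-preserving surgeries correspond to the paper's regularization Cases 1--3. The genuine gap is exactly the step you defer: producing a redundant edge when a walk makes a U-turn on $e=xy$ around an odd closed middle. This is the entire technical content of the paper's proof, and your proposed justification -- ``an edge not forced by any tight barrier is therefore redundant'' via complementary slackness against a minimum odd $T$-walk barrier -- is not a valid principle as stated: redundancy of $e^\star$ means there exists a maximum packing of value $\nu$ with zero load on $e^\star$, and slackness against one optimal dual certificate does not yield that (in the stuck configuration the offending edge is in fact saturated). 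The paper does not argue through barriers at all; it first performs terminal evacuation (\cref{sec:initial_signing}) and subcubization (\cref{sec:subcubization}) precisely so that the stuck configuration becomes local enough that \cref{lem:picture_is_like_we_imagine_it} pins down the picture ($\deg y=3$, the fragment $C$ enters and leaves $y$ on negative valencies of distinct edges $yu$, $vy$), identifies the redundant edge as the one carrying both a positive and a negative valence, and then \emph{explicitly reroutes} any trail using the surviving valence through the freed fragment $C$ (\cref{lem:regularization_tightness}). None of this structure is recoverable from a barrier argument alone, and without subcubization the case analysis does not even get off the ground.

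There is a second, independent flaw in your recomputation step. After deleting $e^\star=yu$ with $y,u\notin T$, both endpoints acquire odd degree, so $cap(\delta(y))\equiv 2\pmod 4$ in $(G'-e^\star,T,\2)$ and \cref{thm:odd_walk_packing} then only guarantees a \emph{half-integer} maximum packing; your invariant ``integer maximum odd $T$-walk packing of value $\nu$'' cannot be restored by reapplying that theorem. The paper sidesteps this by never returning to the plain network: it maintains an inner balanced signing on valencies together with a $(p,q)$-tightness invariant, deletes one positive and one negative valence of the redundant edge (so the associated bidirected graph stays inner Eulerian), and regenerates the integer packing via the bidirected min-max theorem (\cref{thm:babenko_karzanov}, through \cref{lem:decomposition2}), where the $(p,q)$ bookkeeping on negative valencies at terminals is what guarantees that at least $p$ of the regenerated alternating trails are odd. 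Your plan as written would need a replacement for both of these mechanisms.
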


We use a chemistry-inspired notation: replace each edge in $G$ with two \textbf{valencies} each of which may be occupied by a walk. More formally:
\begin{definition}
For edge $e \in E(G)$, denote $e^1$ and $e^2$ to be two \textbf{valencies} of $e$; edge~$e$ is called \textbf{underlying} for $e^1$ and $e^2$. Define the \textbf{valence graph} $G^{12}$ to be the graph on the same vertices as $G$ with valencies regarded as edges.
\end{definition}

In what follows, instead of integer odd $T$-walk packings in $(G, T, \2)$ we shall be dealing with integer odd $T$-trail packings in $(G^{12}, T, \1)$, which effectively are sets of edge-disjoint odd $T$-trails in $G^{12}$. However, a $T$-trail in $G^{12}$ may correspond to a non edge-simple $T$-walk in $G$ once we replace valencies with their underlying edges. This is captured as follows:

\begin{definition}
Edge $e \in E(G)$ is called \textbf{irregular} for $T$-trail $W$ in $G^{12}$ if $W$ traverses both valencies $e^1, e^2$ and \textbf{regular} otherwise.
\end{definition}

Hence we are looking for a maximum set of edge-disjoint odd $T$-trails in~$G^{12}$ without irregular edges.

A brief outline of our approach is as follows. In \cref{sec:signing} we introduce \textbf{signing} on valencies that guide $T$-trails and ensure they have proper parities. Given a suitable signing, we prove the existence of an integer odd $T$-trail packing in $(G^{12}, T, \1)$ (with possible irregular edges) of the needed value by reduction to \textbf{bidirected networks}. We also prove that a suitable signing exists.

In \cref{sec:initial_signing} we construct such a signing and also perform the so-called \textbf{terminal evacuation} by introducing an auxiliary terminal $t'$ for each $t \in T$ that is connected to $t$ with a proper number of valencies of certain signs. This transformation allows to assume that no trail in the packing contains any terminal as its intermediate vertex.

Next, \cref{sec:subcubization} ensures that inner vertices are of degree at most~3.

Finally in \cref{sec:regulalization} we deal with irregular edges. We show that whenever both valencies $e^1$ and $e^2$ of some edge $e$ are used by some odd $T$-trail $W$ in the packing, either $W$ could be simplified (preserving its parity) or $e$ is in fact \textbf{redundant}, i.e. $G$ can be reduced by dropping~$e$. This reduction preserves the needed properties of $G$; hence one can recompute a packing and iterate. These iterations continue until there are no more remaining irregular edges.


\subsection{Signed graphs}
\label{sec:signing}

We also use the framework of \textbf{signed graphs} whose edges are endowed with signs ``\texttt{+}'' and ``\texttt{-}''. Intuitively, a signed valence graph introduces a convenient family of $T$-trails defined by the requirement of alternation which makes parity of the trail uniquely determined by the signs of the first and the last valence.

\begin{definition}
A \textbf{signing} is an arbitrary function $M : E(G^{12}) \rightarrow \{\texttt{+}, \texttt{-}\}$. Graph $G^{12}$ together with some signing $M$ forms a \textbf{signed valence graph} $(G^{12}, M)$. In presence of terminal set $T \subseteq V(G)$, a \textbf{signed valence network} $(G^{12}, M, T, \1)$ appears. A $T$-trail $W$ in $(G^{12}, M)$ is called \textbf{alternating} if signs of valencies alternate along $W$.
\end{definition}

\begin{definition}
\label{def:inner_eulerianness}
Signing $M$ is called \textbf{inner balanced} if for any $v \in V(G) - T$, the number of positive edges incident to $v$ equals the number of negative edges incident to $v$.
\end{definition}

We shall need the notion of \textbf{bidirected graphs}, which generalize digraphs and admit three possible kinds of edges: a usual \textbf{directed edge} (ingoing for one endpoint and outgoing for another), a \textbf{positive edge} (which is ingoing for both its endpoints) and a \textbf{negative edge} (which is outgoing for both its endpoints). The definition of a bidirected walk or a bidirected trail is similar to \cref{def:walk_def} with the only difference that for any internal vertex $v_i$ exactly one of $e_i, e_{i+1}$ is ingoing to~$v_i$ and another is outgoing from~$v_i$. Refer to \cite[Ch. 36]{Sch-03} for details.

The notion of inner Eulerianness is extended to bidirected graphs as follows: a bidirected graph~$G$ is \textbf{inner Eulerian} with respect to terminal set $T$ if for any $v \in V(G) - T$ the number of edges ingoing to $v$ is equal to the number of edges outgoing from $v$. Similarly to the undirected case, triple $(G, T, cap)$ consisting of bidirected graph $G$, terminal set $T$ and capacity function $cap$ is called a \textbf{bidirected network}.

We rely on two theorems of a similar kind, one of which is due to Cherkassky~\cite{cher-77} and Lov\'{a}sz~\cite{lov-76}, and another is due to Babenko and Karzanov \cite[Th. 1.1]{BK-07}:

\begin{theorem}[Min-max formula for $T$-trail packings in inner Eulerian undirected graphs \cite{cher-77}, \cite{lov-76}]
\label{thm:lovasz_cherkassky}
Let $(G, T, \1)$ be an inner Eulerian network. Then the value of a maximum packing of $T$-trails equals $\frac{1}{2} \sum_{t \in T} \lambda(\{t\}, T - \{t\})$. Such a packing can be chosen integer and can be constructed in polynomial time.
\end{theorem}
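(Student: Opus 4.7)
The plan is to combine a straightforward cut-counting argument for the upper bound with Lov\'asz's splitting-off technique for the lower bound and the algorithmic construction. For the upper bound, let $\mathcal{P}$ be any (even fractional) $T$-trail packing. Each trail in $\mathcal{P}$ has two distinct endpoints in $T$, so $2\val{\mathcal{P}}$ equals the total contribution of trails to terminals summed over $t \in T$. For any fixed $t \in T$, the sub-packing of trails incident to $t$ is an $s$--$t$ flow of value $\mathcal{P}(\delta(t))/\ldots$, and by the standard max-flow-min-cut theorem applied with source $\{t\}$ and sinks $T - \{t\}$ one has that this contribution is bounded by $\lambda(\{t\}, T - \{t\})$. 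Summing over $t$ and dividing by $2$ yields $\val{\mathcal{P}} \leq \tfrac12 \sum_{t \in T} \lambda(\{t\}, T - \{t\})$.

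For the matching lower bound together with the integrality and polynomial-time statements, I would proceed by induction on $|V(G) - T|$. In the base case $V(G) - T = \varnothing$, every edge is itself a one-edge $T$-trail, so the family $\{e : e \in E(G)\}$ is an integer $T$-trail packing of value $|E(G)|$; on the other hand $\lambda(\{t\}, T - \{t\}) = \deg_G(t)$ for each $t \in T$, so $\tfrac12 \sum_t \lambda(\{t\}, T - \{t\}) = |E(G)|$ and the bound is met.

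For the inductive step, pick any non-terminal $v \in V(G) - T$ (which has even degree by inner Eulerianness) and invoke Lov\'asz's splitting-off theorem: there exist two edges $uv, vw \in \delta(v)$ whose replacement by a single new edge $uw$ (dropping $v$ if it becomes isolated) does not decrease $\lambda(\{t\}, T - \{t\})$ for any $t \in T$. This operation preserves inner Eulerianness (it decreases $\deg(v)$ by~$2$ and leaves the degrees of all other vertices unchanged), and it cannot increase the terminal min-cuts either, since a trail through $uw$ can always be re-lifted into a trail through $v$ via $uv, vw$. Iterating splitting-off at $v$ until it becomes isolated, and then removing it, strictly decreases $|V(G) - T|$ without changing either side of the claimed min-max identity; by the induction hypothesis the reduced network has an integer $T$-trail packing of the required value, and lifting each split edge $uw$ back to the original pair $uv, vw$ turns this into an integer $T$-trail packing in $G$ of the same value. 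Since each splitting-off step is polynomial (a valid pair is found by testing cut values), the overall procedure is polynomial.

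The main obstacle is the existence of a legal splitting pair guaranteed by Lov\'asz's theorem: one must show that among the $\binom{\deg(v)}{2}$ pairs at an inner Eulerian non-terminal vertex~$v$ at least one avoids creating a ``dangerous'' cut that strictly separates two terminals and becomes under-capacitated. The standard proof is a short but delicate case analysis on the intersection pattern of tight sets around~$v$; it can be imported as a black-box lemma from \cite{Sch-03}. Everything else in the argument is routine bookkeeping.
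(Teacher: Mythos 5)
The paper does not prove this statement at all: \cref{thm:lovasz_cherkassky} is imported as a black box from Cherkassky and Lov\'asz, so there is no internal proof to compare against. Your proposal reconstructs the classical argument for the cited result, and its outline is sound: the cut-counting upper bound (each trail is counted at its two terminal endpoints, and the trails ending at a fixed $t$ form a flow of value at most $\lambda(\{t\}, T-\{t\})$) is fine, the base case $V(G)=T$ is fine, and the induction via complete splitting-off at a non-terminal vertex, followed by lifting each split edge $uw$ back to $uv,vw$, does produce an integer packing of edge-disjoint $T$-trails of the right value in polynomial time.

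The one place where you are too casual is exactly the step you flag as the ``main obstacle'', and the imprecision matters. Lov\'asz's splitting-off theorem in its textbook form preserves a uniform bound $\lambda(x,y)\ge k$ for all $x,y\ne v$, and Mader's strengthening preserves all pairwise local connectivities $\lambda(x,y)$; neither statement literally gives what your induction needs, namely preservation of the vertex-to-set quantities $\lambda(\{t\},T-\{t\})$ for every $t\in T$. These are not determined by pairwise connectivities (e.g.\ two parallel edges $t_1v$ plus single edges $vt_2,vt_3$ versus a path-star $t_1x$, $xt_2$, $xt_3$ have identical pairwise values but different $\lambda(\{t_1\},T-\{t_1\})$), so ``import Lov\'asz as a black box'' is not yet a proof. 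What you need is the variant for inner Eulerian graphs: at a non-terminal $v$ of even degree there is an admissible split preserving $\lambda(\{t\},T-\{t\})$ for all $t$ simultaneously; the dangerous (tight) sets here are sets $X$ with $X\cap T=\{t\}$ and $cap(\delta(X))=\lambda(\{t\},T-\{t\})$ (one terminal against all the rest, not ``two terminals'' as you write), and admissibility follows from the usual uncrossing argument together with the parity observation that in the inner Eulerian case $cap(\delta(X))$ and $\lambda(\{t\},T-\{t\})$ have the same parity. This lemma is true -- it is essentially Lov\'asz's original lemma, and it can alternatively be deduced from Mader's pairwise version by an apex construction (add a vertex $z$ joined to each terminal $s$ by $\deg_G(s)$ parallel edges, so that $\lambda_{G+z}(t,z)=\deg_G(t)+\lambda_G(\{t\},T-\{t\})$) -- but stating and proving it, or citing it in the correct form, is the actual content of the lower bound; with that fixed, your proof is complete.
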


\begin{theorem}[Min-max formula for $T$-trail packings in inner Eulerian bidirected graphs \cite{BK-07}]
\label{thm:babenko_karzanov}
Let $(G, T, \1)$ be an inner Eulerian bidirected network. Then the value of a maximum packing of bidirected $T$-trails equals $\frac{1}{2} \sum_{t \in T} \lambda(\{t\}, T - \{t\})$. Such a packing can be chosen integer and can be constructed in polynomial time.
\end{theorem}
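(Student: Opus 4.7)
The plan is to mirror the proof of the Cherkassky--Lov{\'a}sz theorem (\cref{thm:lovasz_cherkassky}) in the bidirected setting, using a sign-compatible double cover to reduce to the undirected case.

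\emph{Upper bound.} For every $t \in T$, any bidirected $T$-trail with endpoint $t$ occupies at least one edge of every bidirected $\{t\}$--$(T - \{t\})$ cut. Since the packing is edge-disjoint (capacities equal $1$), at most $\lambda(\{t\}, T - \{t\})$ trails are incident to $t$; summing over terminals and dividing by~$2$ gives $\val{\mathcal{P}} \le \tfrac{1}{2}\sum_{t \in T}\lambda(\{t\}, T - \{t\})$.

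\emph{Lower bound via the double cover.} I would form an undirected auxiliary graph $\hat{G}$ on vertex set $V(G) \times \{+, -\}$ with terminal set $\hat{T} := T \times \{+, -\}$, and for every bidirected edge $e = uv$ of $G$ with endpoint signs $\sigma_u, \sigma_v$ insert into $\hat{G}$ the two undirected edges $u^{\sigma_u} v^{-\sigma_v}$ and $u^{-\sigma_u} v^{\sigma_v}$. The sign-flipping involution $\tau \colon v^{\varepsilon} \mapsto v^{-\varepsilon}$ is then a fixed-point-free automorphism of $\hat{G}$ preserving $\hat{T}$ and swapping the two lifts of every edge of $G$. The alternation requirement at an internal vertex of a bidirected trail matches precisely the (locally unique) traversal rule in $\hat{G}$, so a bidirected trail in $G$ lifts to exactly two $\tau$-related edge-simple undirected trails in $\hat{G}$ between terminal copies. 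The cover $\hat{G}$ inherits inner Eulerianness from $G$, and lifting a minimum $\{t\}$--$(T - \{t\})$ cut doubles its size, giving $\lambda_{\hat{G}}(\{t^+, t^-\}, \hat{T} - \{t^+, t^-\}) = 2\lambda_G(\{t\}, T - \{t\})$. Hence applying \cref{thm:lovasz_cherkassky} to $(\hat{G}, \hat{T}, \1)$ yields an integer $\hat{T}$-trail packing $\hat{\mathcal{P}}$ of value $\sum_{t \in T}\lambda_G(\{t\}, T - \{t\})$.

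\emph{Back-translation — the main obstacle.} The critical step is producing a bidirected $T$-trail packing in $G$ from $\hat{\mathcal{P}}$: the naive projection collapses the two lifts of each edge and may create walks that are not trails. I would overcome this by insisting that $\hat{\mathcal{P}}$ be $\tau$-invariant, so that its trails decompose into orbits $\{\hat{W}, \tau(\hat{W})\}$ of size exactly two (as $\tau$ has no fixed vertices). Edge-disjointness of $\hat{\mathcal{P}}$ combined with $\tau$-invariance then forces each orbit to use either both lifts of a given edge of $G$ or neither, so distinct orbits project onto edge-disjoint bidirected trails using every edge of $G$ at most once, and the number of resulting bidirected trails is $\tfrac{1}{2}\val{\hat{\mathcal{P}}}$, matching the upper bound. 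The delicate point is existence of such a symmetric maximum packing: I would prove it by starting from any maximum $\hat{\mathcal{P}}$, considering $\tau(\hat{\mathcal{P}})$ (also maximum of the same value), and performing local exchanges along shared edges of $\hat{W}$ and $\tau(\hat{W})$ to merge the two into $\tau$-paired trails without reducing value or breaking inner Eulerianness. Polynomial runtime then follows from \cref{thm:lovasz_cherkassky} applied to the cover $\hat{G}$, which has size linear in that of $G$.
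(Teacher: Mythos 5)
This statement is not proved in the paper at all: it is imported verbatim from Babenko--Karzanov \cite{BK-07} and used as a black box, so the relevant question is whether your sketch actually constitutes a proof. It does not; the upper bound is fine, but the lower bound has three concrete gaps. First, the undirected double cover does not encode the bidirected alternation rule. Because you require $\tau$ to be fixed-point-free and to swap the two lifts of every edge, each copy $v^{\varepsilon}$ is incident to exactly one lift of \emph{every} edge at $v$, irrespective of its sign at $v$; hence an edge-simple trail in $\hat G$ may enter $v^{\varepsilon}$ on the lift of an in-end and leave on the lift of another in-end, and its projection is then not a bidirected trail. Alternation is captured only by the directed (skew-symmetric) version of the cover, where \cref{thm:lovasz_cherkassky} no longer applies. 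Second, the value you extract from the cover is wrong: \cref{thm:lovasz_cherkassky} applied to $(\hat G,\hat T,\1)$ gives $\frac12\sum_{\hat t\in\hat T}\lambda_{\hat G}(\{\hat t\},\hat T-\{\hat t\})$, where the cuts must also separate $t^+$ from $t^-$, and the resulting packing may contain $t^+$--$t^-$ trails, which project to closed walks at $t$ rather than $T$-trails. For example, take $T=\{s,t\}$ with $s$ isolated and one extra vertex $a$ joined to $t$ by two edges, one ingoing at both $t$ and $a$, the other ingoing at $t$ and outgoing at $a$: the graph is inner Eulerian, the maximum bidirected $T$-trail packing has value $0=\frac12\sum_t\lambda_G$, yet $\hat G$ is a $4$-cycle through $t^+$ and $t^-$ and the Cherkassky--Lov\'asz value there is $2$, realized entirely by $t^+$--$t^-$ trails (a packing that is moreover already $\tau$-invariant, so your symmetrization step cannot exclude it). Using instead the terminal-partition (free multiflow) version with groups $\{t^+,t^-\}$, e.g. via \cref{thm:karzanov_anticliques}, would repair the value, but not the first issue.

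Third, and most importantly, the step you describe as ``local exchanges along shared edges'' to obtain a $\tau$-invariant maximum integer packing is precisely the hard part of the theorem: it is the existence of self-conjugate integer multiflows in inner Eulerian skew-symmetric graphs, which is the actual subject of \cite{BK-07} and requires genuinely new arguments beyond the undirected theorem. No argument is given for why such exchanges terminate with $\tau$-paired trails without loss of value, and one should not expect an easy uncrossing proof, since otherwise the bidirected theorem would be a routine corollary of Cherkassky--Lov\'asz. So the proposal, as written, assumes the crux rather than proving it.
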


Note that the value of a maximum packing in the latter theorem does not depend on actual directions of edges. As we mentioned before, signings encode a certain family of odd $T$-walks. The following theorem describes why it is important for us.

\begin{theorem}
\label{thm:decomposition}
Let $(G^{12}, M, T, \1)$ be a signed valence network with an inner balanced signing $M$. Let $\mathcal{P}$ be a maximum packing of odd $T$-trails in $(G^{12}, T, \1)$, and $\mathcal{S}$ be a maximum packing of alternating $T$-trails in $(G^{12}, M, T, \1)$. Then $\val{\mathcal{P}} = \val{\mathcal{S}}$. Additionally, $\mathcal{P}$ and $\mathcal{S}$ can be chosen integer and can be constructed in polynomial time.
\end{theorem}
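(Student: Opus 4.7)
The plan is to reduce the signed valence setting to the bidirected setting and invoke \cref{thm:babenko_karzanov}. Starting from $(G^{12}, M)$, I would build a bidirected graph $\hat G$ on the same vertex set by reinterpreting each valence $e^i$: if $M(e^i) = \texttt{+}$ we make $e^i$ a positive edge of $\hat G$ (an in-end at both endpoints), and if $M(e^i) = \texttt{-}$ we make it a negative edge (an out-end at both endpoints). The sign-alternation condition then translates verbatim into the ``in-then-out at each internal vertex'' requirement for bidirected trails, so alternating $T$-trails in $(G^{12}, M)$ correspond bijectively (and multiplicity-preservingly) to bidirected $T$-trails in $\hat G$. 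Moreover, at any $v \in V(G) \setminus T$ the number of $\texttt{+}$-valencies incident to $v$ equals the bidirected in-degree of $v$, and likewise for $\texttt{-}$ and the out-degree; inner balance of $M$ therefore coincides with inner Eulerianness of $\hat G$.

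Given this, I would apply \cref{thm:babenko_karzanov} to $(\hat G, T, \1)$ to obtain, in polynomial time, a maximum integer bidirected $T$-trail packing of value $\tfrac12 \sum_{t \in T} \lambda_{\hat G}(\{t\}, T \setminus \{t\})$; by the direction-independence remark following that theorem, this equals $\tfrac12 \sum_{t \in T} \lambda_{G^{12}}(\{t\}, T \setminus \{t\})$. Pulling the packing back through the bijection yields an integer alternating $T$-trail packing $\mathcal{S}$ of the same value. Conversely, $G$ inner Eulerian forces $G^{12}$ to be inner Eulerian as well, so \cref{thm:lovasz_cherkassky} bounds the size of any $T$-trail packing in $(G^{12}, T, \1)$ — and in particular of $\mathcal P$, whose trails are a fortiori $T$-trails — above by the same quantity, delivering $\val{\mathcal P} \le \val{\mathcal S}$.

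To close the reverse inequality $\val{\mathcal S} \le \val{\mathcal P}$, I would argue that the alternating packing $\mathcal S$ actually consists of odd $T$-trails: an alternating trail has odd length precisely when its first and last valencies share a sign, which in turn is automatic as soon as every terminal has all of its incident valencies of a single common sign. That uniform-sign-at-terminals property is what will be delivered, together with inner balance at non-terminals, by the initial-signing construction of \cref{sec:initial_signing}. The main obstacle is exactly this last link: inner balance alone does \emph{not} force alternating $T$-trails to be odd — small bipartite examples exhibit inner-balanced signings in which every alternating $T$-trail is even — so the argument genuinely rests on the extra structure of $M$ supplied by the terminal-evacuation and initial-signing step, while the bidirected reduction and the appeal to \cref{thm:babenko_karzanov} and \cref{thm:lovasz_cherkassky} provide the numerical bridge that makes the two packing values meet.
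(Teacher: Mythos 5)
Your reduction is exactly the paper's: build the bidirected graph $\overleftrightarrow{G^{12}}$ from the signing, observe that inner balance of $M$ is precisely inner Eulerianness of that bidirected graph and that alternating $T$-trails correspond to bidirected $T$-trails, and then equate both maxima with $\tfrac12\sum_{t\in T}\lambda(\{t\},T-\{t\})$ via \cref{thm:babenko_karzanov} and \cref{thm:lovasz_cherkassky}, which also supply integrality and polynomial-time construction. (One small slip: you derive inner Eulerianness of $G^{12}$ from inner Eulerianness of $G$, which is not a hypothesis of this theorem; it holds unconditionally, since every vertex of $G^{12}$ meets an even number of valencies.)

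The point where you stop, namely $\val{\mathcal S}\le\val{\mathcal P}$, i.e. that the alternating packing can be taken to consist of \emph{odd} trails, is precisely the step the paper's own proof does not justify either: it simply asserts that the maximum packing of odd $T$-trails has value $\tfrac12\sum_{t\in T}\lambda(\{t\},T-\{t\})$ by \cref{thm:lovasz_cherkassky}, although that theorem carries no parity constraint. Your skepticism is warranted: read literally, the statement fails, e.g. for $G$ the two-edge path $t_1-v-t_2$ with $T=\{t_1,t_2\}$, where $G^{12}$ is bipartite with both terminals in the same colour class (so no odd $T$-trail exists at all), yet an inner balanced signing exists and admits an alternating $T$-trail packing of value $2$. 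What the paper actually proves, and all that is used downstream (in \cref{lem:decomposition2} the theorem is applied to an arbitrary integer $T$-trail packing, and parities are then controlled through $(p,q)$-tightness, i.e. by counting negative valencies at terminals), is the parity-free equality: maximum $T$-trail packing value $=$ maximum alternating $T$-trail packing value $=\tfrac12\sum_{t\in T}\lambda(\{t\},T-\{t\})$. So your proposal reproduces the paper's argument for that corrected statement and correctly diagnoses that any genuine oddness guarantee must come from the additional structure of the signing built in \cref{sec:initial_signing}, not from inner balance alone; as a proof of the theorem as literally stated, both your attempt and the paper's proof leave the word ``odd'' unsupported.
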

\begin{proof}
Construct an auxiliary bidirected graph $\overleftrightarrow{G^{12}}$ corresponding to the signed valence graph $(G^{12}, M)$ as follows: edges in $\overleftrightarrow{G^{12}}$ correspond to valences in $E(G^{12})$; an edge is positive if the sign of the valence is ``\texttt{+}'' and negative otherwise; $M$ being inner balanced implies that $\overleftrightarrow{G^{12}}$ is inner Eulerian, therefore \cref{thm:babenko_karzanov} is applicable to $(\overleftrightarrow{G^{12}}, T, \1)$. Also note that bidirected $T$-trails in $\overleftrightarrow{G^{12}}$ correspond to alternating $T$-trails in $(G^{12}, M)$. Refer to Figures \ref{fig:signed_packing} and \ref{fig:bidirectional_packing} for an example.

Note that $G^{12}$ is automatically inner Eulerian due to each vertex in $V(G^{12})$ being adjacent to an even  number of valencies, therefore \cref{thm:lovasz_cherkassky} is applicable to $(G^{12}, T, \1)$.

It follows that maximum packing $\mathcal{P}$ of odd $T$-trails in $(G^{12}, T, \1)$ and maximum packing $\overleftrightarrow{\mathcal{P}}$ of bidirected $T$-trails in $(\overleftrightarrow{G^{12}}, T, \1)$ are of the same value $\frac{1}{2} \sum_{t \in T} \lambda(\{t\}, T - \{t\})$. The latter packing can be chosen integer and can be constructed in polynomial time, and then transformed into a maximum integer packing $\mathcal{S}$ of alternating $T$-trails in signed valence network $(G^{12}, M, T, \1)$.
\end{proof}

\begin{figure}
\begin{subfigure}{0.45\textwidth}
\centering
\includegraphics{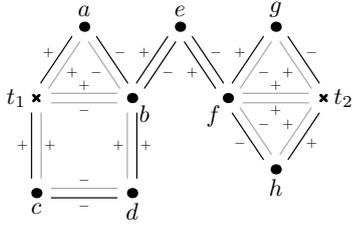}
\caption{Packing of two $t_1-t_2$ trails in $(G^{12}, M, T, \1)$, one of which is even and another is odd.}
\label{fig:signed_packing}
\end{subfigure}\hfill
\begin{subfigure}{0.5\textwidth}
\centering
\includegraphics{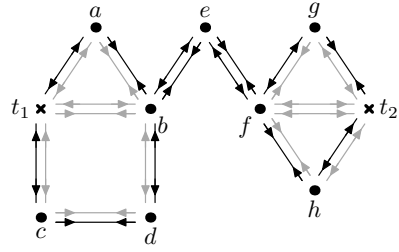}
\caption{Packing of two bidirected $t_1-t_2$ trails in $(\overleftrightarrow{G^{12}}, T, \1)$, one of which is even and another is odd.}
\label{fig:bidirectional_packing}
\end{subfigure}
\caption{Correspondence between signed valence graph with inner balanced signing $(G^{12}, M, T, \1)$ and inner Eulerian bidirected graph $(\overleftrightarrow{G^{12}}, T, \1)$. Terminals are crosses, other vertices are black dots.}
\end{figure}

\begin{definition}
A signed valence network $(G^{12}, M, T, \1)$ with inner balanced signing $M$ is \textbf{$(p, q)$-tight} if 1) there exists an integer $T$-trail packing in $(G^{12}, T, \1)$ of value $p + q$ and 2) the number of ``\texttt{-}'' valencies adjacent to terminals~$T$ is $q$.
\end{definition}

\begin{lemma}
\label{lem:decomposition2}
Given a signed valence network $(G^{12}, M, T, \1)$ with a $(p,q)$-tight inner balanced signing~$M$, there exists an integer packing $\mathcal{P} + \mathcal{Q}$ in $(G^{12}, M, T, \1)$ of value $p + q$, where $\mathcal{P}$ consists of at least~$p$ odd alternating $T$-trails and $\mathcal{Q}$ consists of at most~$q$ even alternating $T$-trails. Moreover, $T$-trails in $\mathcal{P} + \mathcal{Q}$ can be chosen so as to avoid passing through terminals $T$ as intermediate vertices.
\end{lemma}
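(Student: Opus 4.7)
The plan is to extract the required packing from a maximum integer alternating $T$-trail packing $\mathcal{S}$ in $(G^{12}, M, T, \1)$. By \cref{thm:decomposition} such an $\mathcal{S}$ is produced in polynomial time with value equal to $\frac{1}{2}\sum_{t \in T} \lambda_{G^{12}}(\{t\}, T-\{t\})$, which by \cref{thm:lovasz_cherkassky} is also the maximum $T$-trail packing value in $(G^{12}, T, \1)$; condition~(1) of $(p,q)$-tightness bounds the latter below by $p+q$, giving $\val{\mathcal{S}} \geq p+q$.

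The central combinatorial observation is a count of ``\texttt{-}''-signed terminal valencies consumed by any alternating packing. Each trail $W \in \mathcal{S}$ uses two terminal-incident valencies as its endpoints---with equal signs if $W$ is odd and opposite signs if $W$ is even---and if $W$ visits a terminal internally, it uses two further terminal valencies at each such visit, which by alternation are one ``\texttt{+}'' and one ``\texttt{-}''. Summing over $W$, the number of ``\texttt{-}''-terminal valencies consumed by $\mathcal{S}$ equals $2 o_-(\mathcal{S}) + e(\mathcal{S}) + \sum_{W \in \mathcal{S}} k_W$, where $o_-(\mathcal{S})$ is the number of odd trails with both ``\texttt{-}'' endpoints, $e(\mathcal{S})$ is the number of even trails, and $k_W$ is the number of internal terminal visits of $W$. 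Condition~(2) of $(p,q)$-tightness makes this sum at most $q$, and in particular $e(\mathcal{S}) \leq q$. Thus $\mathcal{S}$ has at least $\val{\mathcal{S}} - q \geq p$ odd alternating trails, and discarding $\val{\mathcal{S}} - (p+q)$ odd trails yields a sub-packing of value exactly $p+q$ containing at least $p$ odd (forming $\mathcal{P}$) and at most $q$ even (forming $\mathcal{Q}$) trails.

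Finally, to make trails avoid terminals as intermediate vertices, I split each trail in the sub-packing at every internal terminal visit. The sign-count identity above is invariant under such splits---the ``\texttt{-}'' contribution lost from some $k_W$ re-appears as a new endpoint of the opposite sign on an adjacent sub-trail---so the inequality $e \leq q$ is preserved. The main obstacle is that a split may create a sub-trail with coinciding endpoints at a single terminal (a closed ``tour''), which is not a $T$-trail and must be discarded; I plan to handle this via the standard Eulerian re-routing argument underlying the constructive proofs of \cref{thm:lovasz_cherkassky} and \cref{thm:babenko_karzanov}, which shows that the edges of such a tour at terminal $t$ can always be absorbed into other trails incident to $t$ without loss of total packing value. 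Performing this pre-processing on $\mathcal{S}$ before the trimming step completes the construction.
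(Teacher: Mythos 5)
Your first two steps are the same as the paper's proof: tightness condition (1) combined with \cref{thm:decomposition} yields an integer alternating $T$-trail packing of value at least $p+q$, and tightness condition (2) bounds the number of even trails by $q$, because an even alternating $T$-trail has endpoint valencies of opposite signs and hence occupies a ``\texttt{-}'' valence at a terminal, while the trails are valence-disjoint and only $q$ such valencies exist. Your more detailed bookkeeping identity (counting $2o_-(\mathcal{S})+e(\mathcal{S})+\sum_W k_W$) is correct but more than is needed; the paper only uses the ``one \texttt{-} terminal valence per even trail'' observation.

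The divergence, and the weak point, is your treatment of intermediate terminal visits. The closed-tour ``obstacle'' is not an obstacle, and the absorption you propose is both unnecessary and counterproductive. Unnecessary: a closed tour at a terminal is not a $T$-trail, so discarding it costs no value; and since every trail in the sub-packing joins two \emph{distinct} terminals, splitting it at all internal terminal visits leaves at least one piece that is a genuine alternating $T$-trail, so the total number of $T$-trails stays at least $p+q$, the valence-disjointness and the ``\texttt{-}''-valence count still give at most $q$ even trails, hence at least $p$ odd ones, and trimming surplus odd trails gives value exactly $p+q$. Counterproductive: re-inserting a tour's edges into another trail incident to $t$ makes that trail pass through $t$ as an intermediate vertex --- exactly the property you are trying to establish --- and it is not clear that the unsigned Eulerian re-routing behind \cref{thm:lovasz_cherkassky} preserves alternation of signs, so as stated this is an unproven claim that, if carried out, would undo your splitting. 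The paper sidesteps all of this: if $W$ visits $t\in T$ internally, split it at $t$ into $W_1,W_2$, note that at least one of the two is a valid alternating $T$-trail (the endpoints of $W$ are distinct terminals), keep only that piece, and iterate; the count of $p+q$ trails is preserved throughout and no re-routing is ever needed. Replacing your absorption step by simple discarding (or by the paper's keep-one-piece rule) makes your argument correct.
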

\begin{proof}
The first tightness property implies existence of an integer $T$-trail packing in $(G^{12}, T, \1)$ of value $p + q$. Then, by \cref{thm:decomposition} we get a packing of $p+q$ alternating $T$-trails in $(G^{12}, M, T, \1)$. Break this packing into two parts $\mathcal{P}$ and $\mathcal{Q}$, where $\mathcal{P}$ consists of odd $T$-trails and $\mathcal{Q}$ consists of even $T$-trails.

The second tightness property implies $\val{\mathcal{Q}} \leq q$ as each trail in $\mathcal{Q}$ has a \texttt{-} valence incident to a terminal, therefore $\val{\mathcal{P}} \ge p$, as needed. 

W.l.o.g. all these $T$-trails do not contain terminals as intermediate vertices (for otherwise, if some alternating $T$-trail $W$ visits $t \in T$ as its intermediate vertex, then $W$ can be split into two subtrails $W_1, W_2$ at $t$; among $W_1$, $W_2$ at least one, say $W_1$ is a valid alternating $T$-trail; replace $W$ with $W_1$ and repeat).
\end{proof}


\subsection{Initial signing and terminal evacuation}
\label{sec:initial_signing}

In this section we present an algorithm for constructing a tight inner balanced signing $M$. This is done with the help of network $(\wt{G}, \wt{T}, \wt{cap})$ from \cref{sec:symmetric_graph}. Note that since $cap = \2$, we have $\wt{cap} = \1$. Degrees of vertices in $\wt{G}$ coincide with degrees of their pre-images in $G$, therefore $\wt{G}$ is also inner Eulerian.

Consider a maximum multiflow $\mathcal{F}$ in $(\wt{G}, \wt{T}, \1)$ with commodity graph $H_T$. \cref{thm:karzanov_anticliques} ensures that $\mathcal{F}$ can be chosen integer, i.e. $\mathcal{F}$ is a collection of edge-disjoint $\wt{T}$-trails (endowed with weight~1) in $\wt{G}$ connecting vertex pairs of form $t_1-t'_2$ for distinct $t_1, t_2 \in T$. Each of these $T$-trails is odd, therefore their pre-images are odd $T$-trails in $G^{12}$ (see \cref{fig:trail_like_components}). Denote the packing of these odd $T$-trails in $(G^{12}, T, \1)$ (taken with weight~1) as~$\mathcal{P}$. Let $p := \val{\mathcal{P}}$. The proof of \cref{thm:odd_walk_packing} implies:

\begin{corollary}
\label{cor:maximum_odd_packing}
$\mathcal{P}$ is a maximum odd $T$-trail packing in $(G^{12}, T, \1)$.
\end{corollary}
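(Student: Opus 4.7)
The plan is to leverage the correspondence between the three settings $(\wt G, \wt T, \1)$, $(G, T, \2)$, and $(G^{12}, T, \1)$, using the proof of \cref{thm:odd_walk_packing} as the bridge. First I would observe that every fractional odd $T$-trail packing $\mathcal{Q}$ in $(G^{12}, T, \1)$ can be collapsed to a fractional odd $T$-walk packing $\pi(\mathcal{Q})$ in $(G, T, \2)$ by replacing each valency with its underlying edge: lengths (and hence parities) are preserved, and for every $e \in E(G)$ the resulting load is $\pi(\mathcal{Q})(e) = \mathcal{Q}(e^1) + \mathcal{Q}(e^2) \le 2 = \2(e)$, so $\val{\pi(\mathcal{Q})} = \val{\mathcal{Q}}$. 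Hence the value of any odd $T$-trail packing in $(G^{12}, T, \1)$ is bounded above by the value of a maximum odd $T$-walk packing in $(G, T, \2)$.

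Next, I would pin down this upper bound. Because $G$ is inner Eulerian, $\2(\delta(v)) = 2\deg_G(v) \equiv 0 \pmod 4$ for every $v \in V(G) - T$, so the hypotheses of the last part of \cref{thm:odd_walk_packing} are met, and the proof of that theorem identifies the maximum fractional odd $T$-walk packing value in $(G, T, \2)$ with the maximum integer multiflow value in $(\wt G, \wt T, \1)$ with commodity graph $H_T$, namely $\val{\mathcal{F}} = p$.

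It remains to check that $\mathcal{P}$ achieves this bound as a legitimate $T$-trail packing in $(G^{12}, T, \1)$. Edges of $\wt G$ are in natural bijection with valencies of $G^{12}$: each $uv \in E(G)$ contributes the pair $\{uv', u'v\}$ on one side and $\{(uv)^1, (uv)^2\}$ on the other. Because $\mathcal{F}$ is integer, its constituent $\wt T$-trails are edge-disjoint in $\wt G$, which under the bijection means their preimages in $G^{12}$ are valency-disjoint; bipartiteness of $\wt G$ with bipartition $V(G) \sqcup V(G)'$ forces every $\wt T$-trail in $\mathcal{F}$ from $t_1$ to $t_2'$ to have odd length, so each preimage is a genuine odd $T$-trail in $G^{12}$. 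Thus $\mathcal{P}$ is an integer odd $T$-trail packing in $(G^{12}, T, \1)$ of value $p$, matching the upper bound and establishing maximality.

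I do not anticipate any substantive obstacle: the whole argument is bookkeeping on the valency/edge correspondence already implicit in the construction of $\mathcal{P}$. The only point requiring care is the explicit identification of $E(\wt G)$ with the valency set of $G^{12}$, since this is exactly what upgrades the conclusion from ``odd $T$-walk packing in $(G, T, \2)$'' to ``odd $T$-trail packing in $(G^{12}, T, \1)$'' at the same value.
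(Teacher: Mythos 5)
Your argument is correct and is essentially the paper's own (the paper simply notes that the proof of \cref{thm:odd_walk_packing} implies the corollary): you bound any odd $T$-trail packing in $(G^{12},T,\1)$ by the maximum odd $T$-walk packing value in $(G,T,\2)$, identify that value with $\val{\mathcal{F}}=p$ via the multiflow equivalence, and check that $\mathcal{P}$ attains it. The only difference is that you spell out the edge/valency bijection and the bipartiteness-parity bookkeeping that the paper leaves implicit, which is fine.
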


Consider the subgraph $\wt{Z}$ of $\wt{G}$ consisting of edges not appearing in $T$-trails of $\mathcal{F}$. Since any vertex $v \in V(\wt{G}) - \wt{T}$ has even degree in $\wt{G}$, $\wt{Z}$ is also inner Eulerian with respect to terminals~$\wt{T}$. Therefore $\wt{Z}$ decomposes into two families of edge-disjoint trails: a collection of cyclic trails and a collection of $\wt{T}$-trails.

The former ones correspond to even cyclic trails in $G^{12}$ (due to biparticity of $\wt{G}$); denote the packing (with unit weights) of these even cyclic trails in $(G^{12}, T, \1)$ as $\mathcal{E}$.

The latter ones may be further subdivided into two categories: (i) $t_1-t_2$ or $t'_1-t'_2$ trails for distinct $t_1, t_2 \in T$; and (ii) $t-t'$ trails for $t \in T$. (Note that $t_1-t_2'$ trails for $t_1 \ne t_2$ cannot appear due to maximality of~$\mathcal{F}$.) The first category corresponds to even $T$-trails in $(G^{12}, T, \1)$. The second category corresponds to odd cyclic trails passing through terminals in $(G^{12}, T, \1)$.  Denote the packings in $(G^{12}, T, \1)$ (with unit weights) corresponding to these two categories as $\mathcal{Q}$ and $\mathcal{R}$ respectively, and let $q := \val{\mathcal{Q}}$ and $r := \val{\mathcal{R}}$.

Note that $\mathcal{P} + \mathcal{Q} + \mathcal{R} + \mathcal{E}$ is an integer packing of (possibly cyclic) trails in $(G^{12}, T, \1)$ that traverses each edge in $G^{12}$ exactly once.

Starting from this moment we forget about graph $\wt{G}$ and release the notation $(\cdot)'$ of its meaning of symmetry in $\wt{G}$.

Perform \textbf{terminal evacuation} as follows. For each terminal $t \in T$ introduce a new terminal $t'$ connected to $t$ by a certain number of edges. Namely, extend each $t_1-t_2$ trail ($t_1$ and $t_2$ may coincide) in $\mathcal{P} + \mathcal{Q} + \mathcal{R}$ with new $t'_1-t_1$ and $t_2-t'_2$ valencies in $G^{12}$, obtaining a new valence graph $G'^{12}$ and new integer packings $\mathcal{P}', \mathcal{Q}', \mathcal{R}'$ in $(G'^{12}, T', \1)$.

Note that originally each terminal $t \in T$ had an even number of adjacent edges in $G^{12}$, therefore it serves as an endpoint for an even number of trails in $\mathcal{P} + \mathcal{Q} + \mathcal{R}$ (counting endpoints of $\mathcal{R}$ twice). Hence, for any $t \in T$ the number of added $t'-t$ valencies is even, therefore the underlying graph $G'$ is well-defined and can be constructed by adding half the number of $t'-t$ valencies. Note that odd (resp. even) $T'$-trails in $G'^{12}$ correspond to odd (resp. even) $T$-trails in $G^{12}$.

Now construct signing $M'$ for $G'^{12}$ by: turning trails in $\mathcal{P'}$ and $\mathcal{R'}$ into odd alternating trails starting and ending with \texttt{+} valencies; turning trails in $\mathcal{Q'}$ into even alternating trails (in any of two possible ways); turning each (cyclic) trail in $\mathcal{E}$ alternating (in any of two possible ways). Clearly, such $M'$ is inner balanced w.r.t. $T'$. Also $M'$ is $(p, q)$-tight. Indeed, $\mathcal{P'} + \mathcal{Q'}$ is a $T'$-trail packing of value $p + q$. Finally, ``\texttt{-}'' valencies adjacent to terminals $T'$ correspond to $T'$-trails in $\mathcal{Q}'$, hence there are exactly $q$ of them. Hence we proved the following theorem.

\begin{theorem}
\label{thm:initial_signing_and_evacuation}
Given network $(G, T, \2)$ with inner Eulerian $G$ such that the maximum value of an odd $T$-walk packing in $(G, T, \2)$ is $p$, it is possible to construct in polynomial time a signed valence network $(G'^{12}, M', T', \1)$ with an inner balanced signing $M'$ such that:
\begin{itemize}
\item $M'$ is $(p, q)$-tight for some $q$;
\item any packing $\mathcal{P}'$ of odd $T'$-trails in $(G', T', \2)$ can be transformed into a packing $\mathcal{P}$ of odd $T$-trails in $(G, T, \2)$ of the same value in polynomial time.
\end{itemize}
\end{theorem}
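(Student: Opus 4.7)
The proof will essentially assemble the construction laid out in the preceding paragraphs into a formal argument. The plan is to exhibit an explicit algorithm that builds $(G'^{12}, M', T', \1)$ from $(G, T, \2)$ and verify all required properties.

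First, I would invoke the reduction from \cref{thm:odd_walk_packing}: compute a maximum integer multiflow $\mathcal{F}$ in $(\wt{G}, \wt{T}, \1)$ with commodity graph~$H_T$ via \cref{thm:karzanov_anticliques}. Its value equals $p$, and its preimage in $G^{12}$ is an integer packing $\mathcal{P}$ of $p$ edge-disjoint odd $T$-trails (by \cref{cor:maximum_odd_packing}). Next, I would decompose the residual graph $\wt{Z}$ of edges unused by $\mathcal{F}$. Since $\wt{G}$ is inner Eulerian, so is $\wt{Z}$, hence the classical Eulerian trail decomposition yields a collection of cyclic trails and $\wt{T}$-trails. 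Pulling these back through the $\wt{G}\to G^{12}$ correspondence and classifying by endpoints gives $\mathcal{E}$ (even cyclic trails), $\mathcal{Q}$ (even $T$-trails, arising from $t_1{-}t_2$ or $t'_1{-}t'_2$ pieces), and $\mathcal{R}$ (odd cyclic trails through a terminal, from $t{-}t'$ pieces). Maximality of $\mathcal{F}$ rules out $t_1{-}t'_2$ residual trails for distinct $t_1,t_2$.

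Second, I would perform terminal evacuation: for each $t\in T$ introduce a fresh terminal $t'$ and, for each trail endpoint at $t$ in $\mathcal{P}+\mathcal{Q}+\mathcal{R}$ (counting endpoints in $\mathcal{R}$ twice), attach a new $t'{-}t$ valence to extend that trail. The key arithmetic check is that the total number of such valencies at any $t$ is even — this follows because $\deg_{G^{12}}(t)$ is even (as $G$ is inner Eulerian and each edge contributes two valencies) and each residual edge incident to $t$ lies in exactly one trail of $\mathcal{P}+\mathcal{Q}+\mathcal{R}$ as an endpoint. Pairing these new valencies yields a well-defined underlying graph $G'$ with capacity $\2$, and there is a natural bijection between odd $T'$-trails in $G'^{12}$ and odd $T$-trails in $G^{12}$ (hence in $G$), establishing the second bullet of the conclusion.

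Third, I would define the signing $M'$ trail by trail: orient each trail of $\mathcal{P}'$ and $\mathcal{R}'$ to begin and end with a \texttt{+} valence and alternate in between (parity constraints make this possible since each such trail has odd length after evacuation), sign each trail of $\mathcal{Q}'$ alternately (so it begins and ends with opposite signs, giving one \texttt{-} endpoint-valence), and sign each cyclic trail of $\mathcal{E}$ in one of the two alternating fashions. Since $\mathcal{P}'+\mathcal{Q}'+\mathcal{R}'+\mathcal{E}$ partitions $E(G'^{12})$ and alternation holds at every internal vertex, at each non-terminal vertex the valences split into consecutive $(\texttt{+},\texttt{-})$ pairs, giving inner balance.

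The main obstacle is the tightness verification. I would argue: $\mathcal{P}'+\mathcal{Q}'$ is an integer $T'$-trail packing in $(G'^{12}, T', \1)$ of value $p+q$, where $q := \val{\mathcal{Q}}$, which gives the first tightness condition. For the second, the \texttt{-} valencies incident to $T'$ are exactly the terminal endpoints of trails in $\mathcal{Q}'$ — \texttt{+} endpoints were chosen for $\mathcal{P}'$ and $\mathcal{R}'$, and the evacuation valencies are the only valencies incident to $T'$ by construction — so their number is exactly~$q$. I expect the subtlety here to be tracking endpoints carefully, particularly for the $\mathcal{R}'$ trails (which become simple $T'$-trails only after evacuation splits the cycle at $t$) and confirming that $\mathcal{R}'$ contributes nothing to the \texttt{-} endpoint count, which is what makes the signing precisely $(p,q)$-tight rather than $(p,q+r)$-tight.
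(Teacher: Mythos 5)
Your proposal is correct and follows essentially the same route as the paper's own argument: a maximum integer multiflow in $(\wt{G}, \wt{T}, \1)$ via \cref{thm:karzanov_anticliques}, decomposition of the residual $\wt{Z}$ into $\mathcal{Q}$, $\mathcal{R}$, $\mathcal{E}$, terminal evacuation with the even-degree parity check, and the identical signing and $(p,q)$-tightness verification. The only slip is terminological: after evacuation the $\mathcal{R}'$ pieces remain odd \emph{closed} trails at $t'$ rather than $T'$-trails, but since you count only $\mathcal{P}'+\mathcal{Q}'$ toward the value $p+q$ and use only that $\mathcal{R}'$ contributes no \texttt{-} valencies at terminals, the argument is unaffected.
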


\begin{figure}
\centering
\includegraphics[scale=0.9]{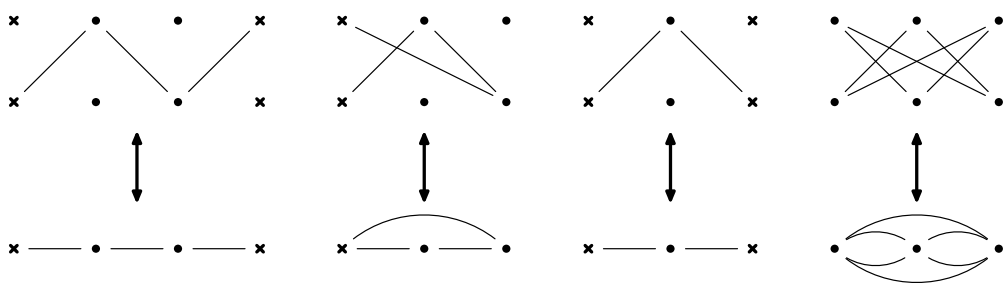}
\caption{Four possible trail-like components of $\wt{G}$ and their images in $G^{12}$: an odd $T$-trail, an odd cyclic trail passing through terminal, an even $T$-trail and an even cyclic trail. Terminals are crosses, other vertices are black dots.}
\label{fig:trail_like_components}
\end{figure}


\subsection{Subcubization}
\label{sec:subcubization}

In this section we prove that it is sufficient to solve the problem only for graphs with degree of non-terminal vertices not exceeding~3, which simplifies the subsequent case splitting.

\begin{definition}
Valence network $(G^{12}, T, \1)$ is called \textbf{inner subcubic}, if $\deg v \leq 3$ for any $v \in V(G) - T$.
\end{definition}

Define the \textbf{supercubicity} of $G$ to be
$$s(G) := \sum\limits_{v \in V(G) - T}{\max\{0, \deg v - 3\}}.$$
Obviously, $s(G) = 0$ for inner subcubic networks.

Let $(G^{12}, M, T, \1)$ be a signed valence network with a $(p, q)$-tight inner balanced signing~$M$. Apply  \cref{lem:decomposition2} to construct an integer packing $\mathcal{P} + \mathcal{Q}$, where $\mathcal{P}$ (resp. $\mathcal{Q}$) consists of at least $p$ (resp. at most $q$) odd (resp. even) alternating $T$-trails.

Consider an inner vertex $v$ of degree $d \geq 4$. Denote edges incident to $v$ in $G$ as $\delta_G(v) = \{e_1, \dots, e_d\}$. Whenever some trail $W$ in $\mathcal{P} + \mathcal{Q}$ passes through $v$, it contains a pair of consequent valencies corresponding to some edges $\{e_i, e_j\}$ in $E(G)$; call $(e_i,e_j)$ for $i \le j$ an (ordered) \textbf{transit pair}. (Note that this ordering of $e_i$ and $e_j$ is not related to the order in which these edges are passed by $W$.) Clearly, whenever an alternating trail passes through a transit pair, it takes valencies of opposite signs.

Valencies corresponding to edges in $\delta_G(v)$ not traversed by any of $W_i$ could also be (arbitrarily) divided into pairs of opposite signs (due to signs balance). Fix some division; it generates (by replacing valencies with their preimages in $G$) more pairs $(e_i,e_j)$ for $i \le j$ that we also regard as transit. Totally we get exactly $d$ transit pairs.

\begin{lemma}
One can partition the set of incident edges $\delta_G(v)$ into two subsets $L \sqcup R$ such that $|L|, |R| \geq 2$ and there are at most two transit pairs $(e_i, e_j)$ (call them \textbf{split transit pairs}) such that $e_i, e_j$ belong to distinct subsets, i.e. $e_i \in L$, $e_j \in R$ or $e_i \in R$, $e_j \in L$.
\end{lemma}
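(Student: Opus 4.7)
The plan is to encode the transit-pair structure at $v$ as an auxiliary multigraph $H$ on vertex set $\delta_G(v) = \{e_1, \ldots, e_d\}$: each transit pair $(e_i, e_j)$ becomes one edge of $H$ joining $i$ to $j$, which is a loop if $i = j$. Under this encoding, a valid partition of $\delta_G(v)$ corresponds to a vertex partition $V(H) = L \sqcup R$ with $|L|, |R| \geq 2$ and at most two edges of $H$ crossing between $L$ and $R$; crucially, a loop of $H$ can never be ``split'' regardless of the partition, so it contributes $0$ to the count of split transit pairs.

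The first and key step will be to observe that $H$ is $2$-regular. Indeed, the $d$ transit pairs consume exactly $2d$ valencies, which is the total number of valencies among the $d$ edges of $\delta_G(v)$; hence every $e_i$ contributes both of its valencies to transit pairs and so has exactly two endpoint-incidences in $H$ (loops counted twice). Consequently $H$ decomposes into edge-disjoint cycles, where a ``cycle'' may have length $1$ (a loop), $2$ (two parallel edges), or $\geq 3$ in the usual sense.

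The second step will be a short case analysis on this cycle decomposition. Let the cycles be of vertex sizes $n_1 \geq \cdots \geq n_k$ summing to $d \geq 4$. If some cycle $C$ of length $\ell \geq 2$ satisfies $\ell \leq d - 2$, I would take $L := V(C)$ and $R := V(H) - V(C)$: then $|L|, |R| \geq 2$ and no edge of $H$ crosses. If $H$ consists solely of loops, any partition into parts of sizes $2$ and $d - 2$ has zero crossings. The only remaining configurations are (i) $H$ is a single cycle of length $d$, and (ii) $H$ consists of one cycle of length $d - 1 \geq 3$ together with a single loop. In case (i) I would put two consecutive vertices of $H$ into $L$, giving $|L| = 2$, $|R| = d - 2$, and exactly two crossings. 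In case (ii) I would put the loop together with any one vertex of the big cycle into $L$ and the rest of the big cycle into $R$, again achieving $|L| = 2$, $|R| = d - 2$ and exactly two crossing edges.

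The main conceptual step will be recognizing the $2$-regularity of $H$: once that is in place, the rest reduces to a short enumeration of cycle-decomposition types and does not pose a real obstacle. The one subtlety to handle with care is that loops of $H$ contribute $2$ to vertex degrees while corresponding to transit pairs that never count as split, a bookkeeping distinction that has to be respected throughout the case analysis.
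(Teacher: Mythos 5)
Your proof is correct, but it takes a noticeably different and heavier route than the paper's. You encode the transit pairs as a $2$-regular auxiliary multigraph $H$ on $\delta_G(v)$ (loops for pairs $(e_i,e_i)$), decompose it into cycles, and split into cases according to the cycle sizes; note that in a $2$-regular multigraph the cycle decomposition is automatically \emph{vertex}-disjoint (any shared vertex would have degree at least $4$), which is the fact you implicitly need when you claim that $L := V(C)$ produces no crossing edges, so you should state vertex-disjointness rather than mere edge-disjointness. The paper's proof avoids all of this machinery: if there is any non-loop transit pair $(e_i,e_j)$ with $i<j$, it simply sets $L := \{e_i, e_j\}$ and $R := \delta_G(v) - L$ (so $|R| = d-2 \ge 2$), and observes that any split transit pair must consume the one remaining valence of $e_i$ or of $e_j$, so there are at most two split pairs; if every transit pair is a loop, any partition with $|L|,|R|\ge 2$ has no split pairs at all. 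Your approach buys a sharper structural picture (in most cycle configurations you even achieve zero split pairs, and the $2$-regularity viewpoint makes the bookkeeping transparent), whereas the paper's argument is shorter, needs no case enumeration beyond the loop/non-loop dichotomy, and uses only the trivial fact that each edge has exactly two valencies. Since the lemma only requires ``at most two'' split pairs, the extra precision of your analysis is not needed downstream, but the argument is sound.
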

\begin{proof}
Suppose there exists a transit pair $(e_i, e_j)$ for $i < j$. Define $L := \{e_i, e_j\}$ and $R := \delta_G(v) - L$. Each split transit pair must use another valence of $e_i$ or $e_j$, hence there could be at most two such pairs.

On the other hand, if all transit pairs are of the form $(e_i, e_i)$, then an arbitrary partition $L \sqcup R$ with $|L|, |R| \geq 2$ will do.
\end{proof}

\begin{figure}
\centering
\includegraphics{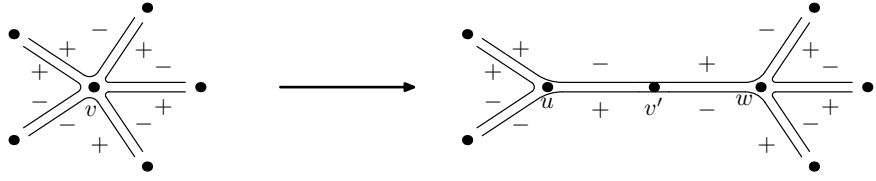}
\caption{Subcubization at $v$}
\label{fig:subcubization}
\end{figure}

Construct a new valence network $(G', M', T' = T, \1)$ (\cref{fig:subcubization}) by replacing vertex $v$ with three vertices $u$, $v'$, $w$ and two edges $uv'$, $v'w$, and also replacing $v$ with $u$ in all edges from $L$ and replacing $v$ with $w$ in all edges from $R$. There are either 0 or 2 split transit pairs; if there are two of them, extend these trails by inserting valencies of $uv'$ and $v'w$ with suitable signs so that signing $M'$ is inner balanced. If there are no transit pairs, simply make both $uv'$ and $v'w$ have one positive and one negative valence. Thus, we also obtain a new packing $\mathcal{P}' + \mathcal{Q}'$ in $(G'^{12}, M', T, \1)$, where $\mathcal{P}'$ (resp. $\mathcal{Q}'$) contains at least~$p$ (resp. at most~$q$) odd (resp. even) alternating $T$-trails not passing through terminals as intermediate vertices.

\begin{lemma}
$s(G') = s(G) - 1$ for the resulting $G'$.
\end{lemma}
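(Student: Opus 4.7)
The plan is just to compute how the supercubicity sum changes when $v$ is replaced by $u$, $v'$, $w$ and unpack the definitions. The only vertex whose contribution to $s$ is removed is $v$ (it is no longer in $V(G')$), while the three new inner vertices $u$, $v'$, $w$ are introduced; every other vertex has the same degree in $G'$ as in $G$, and none of $u, v', w$ are terminals since $T' = T$ and $v \notin T$.

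First I would read off the new degrees from the construction: the edges of $L$ (with $v$ replaced by $u$) together with the new valence-pair $uv'$ make $\deg_{G'}(u) = |L| + 1$; symmetrically $\deg_{G'}(w) = |R| + 1$; and $v'$ is incident only to $uv'$ and $v'w$ so $\deg_{G'}(v') = 2$. Since $u, v', w \notin T$, they each contribute $\max\{0, \deg_{G'}(\cdot) - 3\}$ to $s(G')$, while $v$ contributed $\max\{0, d-3\} = d - 3$ to $s(G)$ (using $d \geq 4$).

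Next I would combine these. The difference satisfies
\[
s(G') - s(G) = \max\{0, |L|-2\} + \max\{0, |R|-2\} + \max\{0, 2-3\} - (d-3).
\]
Using the guarantee $|L|, |R| \geq 2$ from the previous lemma, both maxima on the right equal $|L|-2$ and $|R|-2$ respectively, and the third term is $0$. Since $|L| + |R| = d$, the right-hand side becomes $(|L|-2) + (|R|-2) - (d-3) = d - 4 - d + 3 = -1$, which is exactly what is claimed.

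There is no real obstacle here; the only point to be careful about is that $u, v', w$ are genuinely non-terminal (so they enter the sum defining $s$) and that the lower bounds $|L|, |R| \geq 2$ are precisely what makes the two $\max$ terms collapse to linear expressions, so that the combinatorial cancellation gives exactly $-1$ rather than a smaller decrease.
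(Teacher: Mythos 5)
Your proof is correct and follows essentially the same route as the paper: compute $\deg u = |L|+1$, $\deg w = |R|+1$, $\deg v' = 2$, use $|L|,|R| \geq 2$ (and $d \geq 4$) to resolve the $\max$ terms, and cancel to get $-1$. The explicit remark that $u, v', w$ are non-terminal is a point the paper leaves implicit, but the argument is the same.
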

\begin{proof}
First of all, $\deg v' = 2 < 3$, so we do not need to consider $v'$ when calculating the change of supercubicity. Then, $\deg u = 1 + |L| \geq 3$ and $\deg w = 1 + |R| \geq 3$; also $\deg u + \deg w = 2 + |L| + |R| = d + 2$ and finally $\deg v = d$. We conclude:
\begin{multline*}
s(G') - s(G) = \max\{0, \deg u - 3\} + \max\{0, \deg w - 3\} - \max\{0, \deg v - 3\} = \\
(\deg u - 3) + (\deg w - 3) - (\deg v - 3) = (\deg u + \deg w) - \deg v - 3 = (d + 2) - d - 3 = -1\,.
\end{multline*}
\end{proof}

Repeat these transformations until there are no more inner vertices with degree more than 3. We obtain an inner subcubic signed valence network $(G'^{12}, M', T' = T, \1)$ with an inner balanced signing $M'$. Note that any $T'$-trail $W'$ in $(G'^{12}, T', \1)$ may easily be transformed into a $T$-trail $W$ in $(G^{12}, T, \1)$ of the same parity by performing all actions in the reverse order and removing added parts of $W'$, if there are any.

\begin{lemma}
The resulting signing $M'$ is $(p, q)$-tight.
\end{lemma}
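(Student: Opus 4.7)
The plan is to verify the two conditions that define $(p,q)$-tightness for $(G'^{12}, M', T, \1)$; inner balance of $M'$ was already arranged during the construction itself, so only conditions (1) and (2) remain.

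For condition (2), the number of ``\texttt{-}'' valencies incident to terminals is still $q$: subcubization operates at an inner vertex $v \in V(G) - T$ and introduces only the two new edges $uv'$ and $v'w$, whose endpoints $u, v', w$ are all non-terminal. No valence adjacent to a terminal is created, destroyed, or re-signed, so the count is inherited verbatim from $M$.

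For condition (1), I would take the packing $\mathcal{P}' + \mathcal{Q}'$ built during subcubization itself as witness. \cref{lem:decomposition2} applied to $(G^{12}, M, T, \1)$ gives $\val{\mathcal{P}} + \val{\mathcal{Q}} = p + q$, and the subcubization defines a parity-preserving bijection between trails of $\mathcal{P} + \mathcal{Q}$ and trails of $\mathcal{P}' + \mathcal{Q}'$: trails avoiding $v$ are unchanged; trails using a non-split transit pair at $v$ are relabeled in place (the pair sits entirely inside $L$ or entirely inside $R$); each of the at most two trails using a split transit pair is extended by a consecutive pair of valencies, one of $uv'$ and one of $v'w$. Edge-disjointness survives because every valence of $G'^{12}$ either pulls back uniquely to a valence of $G^{12}$ or is one of the four newly added valencies, and each new valence is consumed by at most one split trail. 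Hence $\val{\mathcal{P}'} + \val{\mathcal{Q}'} = p + q$, providing the required packing.

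The main obstacle is the implicit local balance check for $M'$ at $u, v', w$, which is where the construction placed the burden of choosing ``suitable signs''. In the two-split-transit-pair case, alternation along each extended split trail forces opposite signs on the two new valencies of $uv'$ used at $u$ (and likewise for $v'w$ at $w$); combined with the original balance at $v$ split across $L$ and $R$, this forces a consistent sign pattern at all three of $u, v', w$. The no-split-transit-pair case is trivial, since assigning one ``\texttt{+}'' and one ``\texttt{-}'' valence to each new edge suffices. I would sketch this local check only briefly, as the rest of the argument is pure bookkeeping.
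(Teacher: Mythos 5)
Your proposal is correct and follows essentially the same route as the paper: the paper's proof likewise notes that the count of terminal-adjacent ``\texttt{-}'' valencies is untouched by subcubization and takes the transformed packing $\mathcal{P}' + \mathcal{Q}'$ of value $p+q$ as the witness for the first tightness condition. Your extra bookkeeping (the parity-preserving correspondence of trails and the local sign check at $u, v', w$) just spells out details the paper delegates to the construction preceding the lemma.
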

\begin{proof}
The total number of ``\texttt{-}'' valencies adjacent to terminals does not change during subcubization. Also, packing $\mathcal{P}' + \mathcal{Q}$' has the same value as $\mathcal{P} + \mathcal{Q}$, i.e. $p + q$.
\end{proof}

Hence we proved the following theorem.

\begin{theorem}
\label{thm:subcubization}
If $(G^{12}, M, T, \1)$ is signed valence network with a $(p, q)$-tight inner balanced signing $M$, it is possible to construct a signed valence network $(G'^{12}, M', T' = T, \1)$ with a $(p, q)$-tight inner balanced signing $M'$ such that:
\begin{itemize}
\item $G'$ is inner subcubic;
\item any packing $\mathcal{P}'$ of odd $T'$-trails in $(G', T', \2)$ may be transformed into packing $\mathcal{P}$ of odd $T$-trails in $(G, T, \2)$ of the same value in polynomial time.
\end{itemize}
\end{theorem}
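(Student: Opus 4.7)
The plan is to prove the theorem by iterating the single-vertex subcubization construction developed in the three lemmas immediately preceding the statement, until no inner vertex of degree exceeding $3$ remains. The construction is already essentially assembled in those lemmas; what the theorem needs is to bundle them into a termination-and-invariance argument plus a pullback step.

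Concretely, I would first apply \cref{lem:decomposition2} to $(G^{12}, M, T, \1)$ to obtain an integer alternating packing $\mathcal{P} + \mathcal{Q}$ of value $p+q$ (with at least $p$ odd and at most $q$ even trails) that avoids terminals as intermediate vertices. This packing serves as the ``witness'' guiding the subcubization. Then, while there exists $v \in V(G) - T$ with $\deg v \ge 4$, I perform one step: invoke the partition lemma to find $\delta_G(v) = L \sqcup R$ with $|L|, |R| \ge 2$ and at most two split transit pairs; replace $v$ by a path $u$--$v'$--$w$, re-routing edges of $L$ to $u$ and edges of $R$ to $w$; and assign signs to the four new valencies on $uv'$ and $v'w$ so as to (a) extend the at-most-two affected trails through the new gadget while preserving their alternation and (b) leave $v'$ inner balanced. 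The updated packing $\mathcal{P}' + \mathcal{Q}'$ then has the same value $p+q$ and the same odd/even split as $\mathcal{P} + \mathcal{Q}$.

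Three invariants must be checked after each step. Inner balance: $v'$ is balanced by construction, and $u$, $w$ inherit balance from $v$ together with the newly chosen signs. $(p,q)$-tightness: since $v \notin T$, the number of \texttt{-} valencies incident to terminals is unchanged, and $\mathcal{P}' + \mathcal{Q}'$ still witnesses an integer $T$-trail packing of value $p+q$ in $(G'^{12}, T, \1)$. Strict decrease of $s(G)$: the degree count in the final lemma before the theorem gives $s(G') = s(G) - 1$, so the loop terminates in at most $s(G) \le 2|E(G)|$ iterations, hence in polynomial time.

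For the pullback, any odd $T'$-trail packing $\mathcal{P}'$ in $(G', T', \2)$ is transformed into a packing $\mathcal{P}$ in $(G, T, \2)$ of the same value by reversing the sequence of steps: at each reversal, a trail of $\mathcal{P}'$ that enters the gadget uses both $uv'$ and $v'w$ (an even contribution to its length, so parity is preserved) and is contracted back to a trail through $v$ along an original transit pair; trails avoiding the gadget are unchanged. Edge-disjointness and the capacity-$\2$ constraint persist because each valence in $G'^{12}$ corresponds uniquely to a valence in the previous graph. The main obstacle is purely combinatorial bookkeeping in the one-step construction, namely verifying that for every configuration of $0$ or $2$ split transit pairs (the only ones that can arise, by a parity argument on the ends of split trails at $v$) the four fresh signs on $uv'$ and $v'w$ can be chosen to satisfy alternation of the affected trails together with the single balance constraint at $v'$; this follows by direct case analysis, after which the theorem is immediate.
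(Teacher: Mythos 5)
Your proposal is correct and follows essentially the same route as the paper: apply \cref{lem:decomposition2} to get an alternating witness packing, repeatedly split a high-degree inner vertex into $u$--$v'$--$w$ guided by transit pairs (using the partition lemma to get $0$ or $2$ split pairs and choosing signs on the new valencies to keep the signing inner balanced), note that supercubicity drops by one per step while $(p,q)$-tightness is preserved, and pull packings back by contracting the gadget, which removes an even number of edges and hence preserves parity.
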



\subsection{Regularization}
\label{sec:regulalization}

Let $(G^{12}, M, T, \1)$ be an inner subcubic signed valence network with a $(p, q)$-tight inner balanced signing $M$. Construct an integer packing $\mathcal{P} + \mathcal{Q}$ of at least $p$ odd alternating $T$-trails (denoted by $\mathcal{P}$) and at most $q$ even alternating $T$-trails (denoted by $\mathcal{Q}$) using \cref{lem:decomposition2}.

Suppose there is edge $xy$ in $E(G)$ that is irregular for some $T$-trail $W$ in $\mathcal{P} + \mathcal{Q}$, i.e. $W$ traverses both of $xy$'s valencies in $G^{12}$. Denote the fragment of $W$ between two occurrences of valencies of $xy$ (but not including them) by~$C$.

Note that $xy$ is not adjacent to any terminal since all $T$-trails in $\mathcal{P} + \mathcal{Q}$ are assumed to avoid passing through terminals as intermediate vertices. Consider cases as follows:

\textbf{Case 1} (\cref{fig:regularization_a}): valencies of $xy$ have opposite signs and $W$ traverses them in the same direction. Simplify $W$ by dropping occurrences of both of these valencies.

\textbf{Case 2} (\cref{fig:regularization_b}): valencies of $xy$ have opposite signs and $W$ traverses them in the opposite directions. Simplify $W$ by dropping occurrences of both of these valencies together with $C$.

\textbf{Case 3} (\cref{fig:regularization_c}): valencies of $xy$ have same signs and $W$ traverses them in the same direction. Simplify $W$ by dropping one of the occurrences of these valencies together with $C$.

\textbf{Case 4} (Figures \ref{fig:regularization_d} and \ref{fig:regularization_e}): valencies of $xy$ have same signs and $W$ traverses them in the opposite directions. Assume that $xy$ is chosen such that $C$ is the shortest possible. W.l.o.g. $y$ belongs to $C$. Finally assume that both valencies of $xy$ are \texttt{+}; the remaining case is done analogously.

\begin{figure}
\centering

\begin{minipage}{\textwidth}
\begin{subfigure}{0.45\textwidth}
\includegraphics{pic-11.mps}
\caption{Case 1}
\label{fig:regularization_a}
\end{subfigure}\hfill
\begin{subfigure}{0.45\textwidth}
\includegraphics{pic-12.mps}
\caption{Case 2}
\label{fig:regularization_b}
\end{subfigure}\\
\begin{subfigure}{0.45\textwidth}
\includegraphics{pic-13.mps}
\caption{Case 3}
\label{fig:regularization_c}
\end{subfigure}\hfill
\begin{subfigure}{0.45\textwidth}
\includegraphics[width=\textwidth]{pic-14.mps}
\caption{Cases 4(i,ii)}
\label{fig:regularization_d}
\end{subfigure}
\end{minipage}\\
\begin{subfigure}{\textwidth}
\includegraphics{pic-15.mps}
\caption{Cases 4(iii,iv)}
\label{fig:regularization_e}
\end{subfigure}
\caption{Regularization cases.}
\label{fig:regularization}
\end{figure}

\begin{lemma}
\label{lem:picture_is_like_we_imagine_it}
In Case 4, $\deg y = 3$ and $C$ starts with a negative valence of some edge $yu$ and terminates with a negative valence of some edge $vy$ with $u \neq v$.
\end{lemma}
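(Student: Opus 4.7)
The plan is to first analyze the sign structure at $y$ using alternation of $W$, then invoke the minimality of $|C|$ to rule out $u = v$, and finally combine with subcubicity to conclude $\deg y = 3$.

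The sign structure at $y$ is forced by alternation. Focus on the assumed subcase where both valences of $xy$ used by $W$ are $\texttt{+}$ (the $\texttt{-},\texttt{-}$ subcase is symmetric). Since $W$ is alternating, the valence of $C$ immediately after the first $xy$-valence must be $\texttt{-}$, and the valence of $C$ immediately before the last $xy$-valence must be $\texttt{-}$. Letting $yu$ and $vy$ be the underlying edges of these two $\texttt{-}$-valences gives the stated structural form.

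To obtain $u \neq v$, I argue by contradiction using the minimality of $|C|$ in the choice of $xy$. Suppose $u = v$ as vertices. The main subcase is that $yu$ and $vy$ denote the same edge~$e$. Then $W$ uses both valences of $e$, both of sign $\texttt{-}$ and traversed in opposite directions at $y$, so $e$ itself is a Case~4 irregular edge. The fragment of $W$ between the two $e$-valences is precisely the interior of $C$, of length $|C|-2$. If $|C|=2$, the two $\texttt{-}$-valences of $e$ would be consecutive in $W$ at~$u$, violating alternation; hence $|C|>2$ and this interior fragment is a non-empty Case~4 fragment strictly shorter than $C$, contradicting minimality. The parallel-edge subcase ($u=v$ as vertices but $yu$ and $vy$ are distinct parallel edges) appears to be the main obstacle, since no single irregular edge immediately emerges; I would attempt to resolve it by a local analysis at $u$ using inner balancedness of $M$ and alternation of $W$, producing either another Case~4 configuration with a shorter fragment or a simplification akin to Cases~1--3.

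For $\deg y = 3$: since both valences of $xy$ appear in $W$, vertex $y$ is traversed by~$W$ rather than being an endpoint (an endpoint consumes only one incident valence). Together with the standing assumption that trails in $\mathcal{P} + \mathcal{Q}$ avoid terminals as intermediate vertices, this forces $y \notin T$, and subcubicity of $(G^{12}, T, \1)$ gives $\deg y \le 3$. Conversely, $xy$, $yu$, $vy$ are three pairwise distinct edges at $y$: neither $yu$ nor $vy$ equals $xy$, because both valences of $xy$ are already accounted for at other positions of $W$, and $yu \neq vy$ follows from the just-established $u \neq v$. Hence $\deg y \ge 3$, giving equality.
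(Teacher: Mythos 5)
Your first two steps track the paper closely: the alternation argument for the sign pattern at $y$ and the exclusion of the subcase $yu=vy$ (same underlying edge) via a strictly shorter Case~4 fragment are exactly the intended use of the minimal choice of $C$. Your route to $\deg y=3$ differs slightly from the paper's (the paper kills $\deg y=1$ by inner balance at $y$ and $\deg y=2$ by balance plus minimality, while you deduce $\deg y\ge 3$ from the pairwise distinctness of $xy$, $yu$, $vy$); that reordering is legitimate, and note that for edge-distinctness you only need the same-edge exclusion, which you did prove. The genuine gap is the one you flag yourself: the lemma asserts $u\neq v$ \emph{as vertices}, so the configuration where $e_1=yu$ and $e_2=vy$ are distinct parallel edges with $u=v$ must be ruled out, and your proposal only offers an intention ("a local analysis at $u$") rather than an argument. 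As written, the lemma is not proved.

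This subcase does yield the required contradiction, but it needs the extra work you postponed. Since $u$ is an intermediate vertex of $W$, it is not a terminal, so $\deg u\le 3$. Inner balance at $y$ (two \texttt{+} valencies of $xy$, plus the two \texttt{-} valencies of $e_1,e_2$ used by $C$) forces the two unused valencies of $e_1,e_2$ to carry one \texttt{+} and one \texttt{-}; inner balance at $u$ then excludes $\deg u=2$ and forces the third edge $f$ at $u$ to have two \texttt{+} valencies. Now the interior $C'$ of $C$ is a cyclic fragment at $u$ whose first and last valencies must be \texttt{+} at $u$, and a short case check (using that $W$ is a trail, so each valence is used once and the reserved traversal directions of $a$, $b$ and of the endpoints of $C'$ cannot be reused) shows that in every branch $W$ traverses both valencies of $f$ in opposite directions, i.e. $f$ is itself a Case~4 irregular edge whose fragment lies strictly inside $C$ — contradicting the minimality of $C$. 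This is precisely the substance behind the paper's one-line remark that $u=v$ "again contradicts the choice of $xy$ with the shortest $C$"; without it (or an equivalent argument), your proof establishes only $yu\neq vy$ as edges, not $u\neq v$.
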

\begin{proof}
If $\deg y = 1$, the inner Eulerianess of $y$ is contradicted as both of its adjacent valencies are positive.

If $\deg y = 2$, two valencies of the remaining adjacent edge are both negative and $W$ must follow both of them in order to be alternating. This contradicts the choice of $xy$ with the shortest $C$.

Finally $\deg y = 3$ and $C$ starts with some valence of $yu$ and terminates with some valence of $vy$, both of which are negative. If $u = v$, this would again contradict the choice of $xy$ with the shortest $C$.
\end{proof}

Consider two remaining valencies of $yu$ and $yv$. For signing to be balanced at $y$, one of them must be \texttt{+} and another must be \texttt{-}. Therefore, one of $yu$ and $yv$ has both a \texttt{+} and a \texttt{-} valence; assume it is $yu$, the other case is done analogously. Call $yu$ \textbf{redundant} and obtain a new signed valence network $(G'^{12}, M', T' = T, \1)$ by removing $yu$ in $G'$.

\begin{lemma}
\label{lem:regularization_tightness}
New signing $M'$ is inner balanced and $(p, q)$-tight.
\end{lemma}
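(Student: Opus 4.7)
The plan is to verify the two properties of $M'$ separately, using the structural information gathered before the lemma. Inner balance is immediate: $M'$ differs from $M$ only by deletion of the two valencies of $yu$, which carry opposite signs by the choice of $yu$ as redundant. Since $y$ and $u$ are intermediate vertices of trails in $\mathcal{P}+\mathcal{Q}$, and such trails avoid terminals as intermediate vertices by \cref{lem:decomposition2}, both are non-terminal; removing one \texttt{+} and one \texttt{-} incidence from each preserves inner balance. The second clause of $(p,q)$-tightness---that the number of \texttt{-} valencies incident to terminals remains $q$---follows likewise, since $yu$ joins two non-terminals.

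The substantive task is the first clause: producing an integer $T$-trail packing of value $p+q$ in $(G'^{12}, T, \1)$. I would construct such a packing by modifying $\mathcal{P}+\mathcal{Q}$ so as to avoid both valencies of $yu$. The decisive observation is that $W$ uses only the \texttt{-} valency of $yu$ (the one at the start of $C$). Were $W$ to use both, $yu$ would itself be irregular for $W$; but since its valencies carry opposite signs it would fall under Case~1 or Case~2 of the regularization---cases that would have been discharged before reaching Case~4. Consequently $yu^+$ is either unused in $\mathcal{P}+\mathcal{Q}$ (sub-case A) or used by a single other trail $W_1$ (sub-case B). In sub-case B, counting the six valencies at the degree-$3$ vertex $y$ (signs $\texttt{+},\texttt{+}$ for $xy$, $\texttt{+},\texttt{-}$ for $yu$, $\texttt{-},\texttt{-}$ for $yv$) and subtracting the four consumed by $W$ forces $W_1$ to transit $y$ as $u \xrightarrow{yu^+} y \xrightarrow{yv^-} v$, using the only two remaining valencies.

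The modification proceeds in two steps. First I excise from $W$ the closed sub-walk $S := (xy^+) \cdot C \cdot (xy^+)$ at $x$, obtaining a shorter $T$-trail $W'$ with the same endpoints; this frees $yu^-$ together with every other edge of $S$. In sub-case A this already yields the required packing in $G'^{12}$. In sub-case B I further replace $W_1$ by the trail $W_1'$ obtained by splicing out its $y$-transit and inserting the $u$-to-$v$ sub-walk of $C$ got by deleting $C$'s first and last valencies. This sub-walk lies in $S$ (hence was freed above), uses no valency of $yu$ (since $W$ does not use $yu^+$, and the remaining occurrence of $yu^-$ in $C$ is exactly the excluded first valency), and is edge-disjoint from the residual pieces of $W_1$ (since $W$ and $W_1$ are edge-disjoint in the original packing).

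The main obstacle will be verifying that $W'$ and $W_1'$ are edge-simple, mutually edge-disjoint, and edge-disjoint from every other trail in the packing; these amount to bookkeeping consequences of the edge-disjointness of $\mathcal{P}+\mathcal{Q}$, together with the observation that in sub-case B the $yv^-$ valency employed by $W_1$ cannot appear in $W$ (and hence not in the inserted sub-walk). Both $W'$ and $W_1'$ retain the endpoints of $W$ and $W_1$ respectively, which lie in $T$; hence the modified collection is an edge-disjoint packing of $p+q$ $T$-trails in $G'^{12}$, confirming the first clause of $(p,q)$-tightness.
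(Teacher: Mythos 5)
Your treatment of inner balance and of the count of \texttt{-} valencies at terminals matches the paper, and your overall strategy for the first tightness clause (excise the two $xy$ valencies and $C$ from $W$, then reroute the user of the remaining valence of $yu$ through the freed part of $C$) is essentially the paper's. However, your ``decisive observation'' --- that $W$ cannot use the \texttt{+} valence of $yu$ because $yu$ would then be a Case~1/2 irregular edge ``discharged before reaching Case~4'' --- is a genuine gap. Nothing in the setup gives Cases 1--3 priority over Case~4: the case split is by the type of the \emph{chosen} irregular edge, and the minimality assumption in Case~4 (``$C$ shortest possible'') only ranges over Case-4 configurations, as is visible in the proof of \cref{lem:picture_is_like_we_imagine_it}, where minimality is contradicted precisely by exhibiting a shorter same-sign, opposite-direction pair. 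So at the moment Case~4 is processed, $W$ may perfectly well traverse both valencies of $yu$ --- either with the second valence inside $C$ or elsewhere on $W$ --- and these are exactly the subcases (ii) and (iii) that the paper's proof handles explicitly (by replacing $e^2$, in whichever trail contains it, including $W'$ itself, with the $y$--$u$ fragment of $C$ avoiding $e^1$). Your proof simply never covers them: sub-case~A assumes $yu^+$ is unused, and sub-case~B assumes it is used by a trail $W_1$ \emph{distinct} from $W$.

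The omission is repairable, but not for free. Either you must handle the two missing subcases (your splicing argument adapts: after excising $C$ from $W$, the interior of $C$ is free, so the transit through $y$ via $yu^+$ can be rerouted even when that transit belongs to $W'$ itself), or you must explicitly modify the regularization to exhaust Cases 1--3 before ever invoking Case~4, and then argue that this preliminary phase terminates (total trail length decreases) and preserves the invariants you rely on (value $p+q$, terminals not intermediate, edge-disjointness). As written, the claim is asserted as if it followed from the given setup, which it does not; everything else in your argument (the degree-3 valence count at $y$, the forced transit $u \to y \to v$ of $W_1$, and the edge-disjointness bookkeeping) is sound and consistent with the paper's subcases (i) and (iv).
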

\begin{proof}
Signing $M'$ is inner balanced since we remove two valencies of the same edge of opposite signs. Also, the removed edge is not adjacent to a terminal, therefore the total number of ``\texttt{-}'' valencies adjacent to terminals is preserved.

Let us prove that a packing of $T$-trails of value at least $p + q$ still remains. Namely, we alter $\mathcal{P} + \mathcal{Q}$ so that none of its $T$-trails passes through valencies of the removed edge~$yu$.

W.l.o.g. let $e^1$ be the valence of $e = yu$ that is the initial or the final valence of~$C$. Alter $W$ by removing both valencies of $xy$ and $C$, obtaining a (non-alternating) subtrail $W'$ avoiding $e^1$. Note that the remaining valence $e^2$ may either: (i) not belong to any trail in $\mathcal{P} + \mathcal{Q}$; (ii) belong to~$C$; (iii) belong to the same trail~$W$ outside of $C$; (iv) belong to another trail in $\mathcal{P} + \mathcal{Q}$.

In subcases (i,ii) (\cref{fig:regularization_d}) $e^2$ is no longer used by any trail in $\mathcal{P} + \mathcal{Q}$; replace $W$ with $W'$.
In subcases (iii,iv) (\cref{fig:regularization_e}), consider trail containing $e^2$ and replace $e^2$ in it with the $y-u$ fragment of $C$ that is different from $e^1$ (note that $C$ is not used by $W'$ anymore).
\end{proof}

Repeat the procedure until no more irregular edges exist. In Cases 1--3 the signed valence graph does not change, but the total length of odd $T$-trails in the packing decreases. Therefore, this step may be iterated until either there are no irregular edges or Case~4 happens and we obtain a new signed graph $(G'^{12}, M')$. In other words, $(|E(G)|, L)$, where $L$ is the total length of $T$-trails in $\mathcal{P}$, decreases lexicographically in each case. Thus the total number of iterations is polynomial. Let us summarize the result of this section by the following theorem.
\begin{theorem}
\label{thm:regularization}
If $(G^{12}, M, T, \1)$ is an inner subcubic signed valence network with a $(p, q)$-tight inner balanced signing $M$, then it is possible to construct an integer packing of odd $T$-trails in $(G, T, \2)$ of value at least $p$ in polynomial time.
\end{theorem}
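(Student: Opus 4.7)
The plan is to iterate the case analysis developed in \cref{sec:regulalization} until no irregular edges remain, at which point the odd part of the packing is the desired output.

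First, I would apply \cref{lem:decomposition2} to the $(p, q)$-tight inner balanced network $(G^{12}, M, T, \1)$ to obtain an integer packing $\mathcal{P} + \mathcal{Q}$ consisting of at least~$p$ odd alternating $T$-trails (in $\mathcal{P}$) and at most~$q$ even alternating $T$-trails (in $\mathcal{Q}$), none of which visits any terminal as an intermediate vertex. So long as some edge $xy \in E(G)$ is irregular for some trail $W \in \mathcal{P} + \mathcal{Q}$, invoke whichever of Cases 1--4 applies. Cases 1--3 perform local surgery on $W$ that strictly shortens it while preserving its parity, its alternating pattern, and its endpoints; in particular, the inequalities $\val{\mathcal{P}} \geq p$ and $\val{\mathcal{Q}} \leq q$ are untouched. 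Case 4 invokes \cref{lem:picture_is_like_we_imagine_it} to exhibit a redundant edge $yu$; its deletion yields a strictly smaller signed valence network $(G'^{12}, M', T, \1)$, which \cref{lem:regularization_tightness} certifies to be inner balanced and $(p, q)$-tight, and in which $\mathcal{P} + \mathcal{Q}$ is rewired around $yu$.

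Termination is controlled by the lexicographic potential $(|E(G)|, L)$, where $L$ is the total edge-length of the trails in the current packing. Each Case 1--3 step leaves $|E(G)|$ fixed but strictly decreases $L$, while each Case 4 step strictly decreases $|E(G)|$; since both coordinates are polynomially bounded, only polynomially many iterations occur. Once the loop halts, no trail in $\mathcal{P}$ traverses both valencies of any edge, so replacing each valence by its underlying edge turns each such trail into a genuine odd edge-simple $T$-trail in $G$; collecting them with unit weights gives an integer packing of odd $T$-trails in $(G, T, \2)$ of value at least~$p$.

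The main obstacle lies in Case 4: in order for $\val{\mathcal{P}}$ never to drop, a redundant edge must reliably exist and the packing must be repairable after this edge is dropped. Existence rests on inner subcubicity (from \cref{thm:subcubization}) combined with inner balancedness, which together force $\deg y = 3$ and a specific sign pattern at~$y$, as extracted in \cref{lem:picture_is_like_we_imagine_it}. Reparability is delivered by \cref{lem:regularization_tightness} via a substitution trick: whichever subcase (i)--(iv) describes the fate of $e^2$, we either simply drop $e^1$ from $W$ or splice the appropriate $y$--$u$ fragment of $C$ into the trail that used $e^2$. Verifying that these substitutions stay alternating, remain edge-disjoint from the rest of the packing, and cover every possibility is the most delicate piece of bookkeeping in the argument.
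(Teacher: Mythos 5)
Your proposal is correct and follows the paper's own argument essentially verbatim: \cref{lem:decomposition2} for the initial alternating packing, the Case 1--4 analysis with \cref{lem:picture_is_like_we_imagine_it} and \cref{lem:regularization_tightness}, the lexicographic potential $(|E(G)|, L)$, and extraction of the odd trails once no irregular edges remain. One small clarification on your final concern: after a Case~4 deletion the rewired trails need \emph{not} stay alternating (the paper explicitly notes $W'$ is non-alternating); this is harmless because the rewiring is only used to certify that the smaller network is still $(p,q)$-tight, after which \cref{lem:decomposition2} is re-applied there to produce a fresh alternating packing, so no bookkeeping about alternation of the spliced trails is actually needed.
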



\subsection{Concluding the proof}

\begin{proof}[Proof of \cref{thm:odd_trail_packing}]

Let $(G, T, \2)$ be a inner Eulerian network and let $p$ be the value of a maximum odd $T$-walk packing in it. Apply \cref{thm:initial_signing_and_evacuation} to construct a signed valence network $(G'^{12}, M', T', \1)$ with a $(p,q)$-tight inner balanced signing $M'$ (for some $q$). By \cref{thm:subcubization} the latter network can be replaced by a subcubic signed valence network $(G''^{12}, M'', T'', \1)$ with a $(p,q)$-tight inner balanced signing $M''$. Now \cref{thm:regularization} implies the existence of packing $\mathcal{P}''$ of odd $T''$-trails of value $p$ in $(G'', T'', \2)$.

Finally reverse the changes applied to network: $\mathcal{P}''$ gives rise to packing $\mathcal{P}'$ of odd $T'$-trails of the same value~$p$ in $(G', T', \2)$ (by \cref{thm:subcubization}); in its turn, $\mathcal{P}'$ generates packing $\mathcal{P}$ of odd $T$-trails of value~$p$ in $(G, T, \2)$ (by \cref{thm:initial_signing_and_evacuation}), as needed.

Note that all of the above steps take polynomial time.
\end{proof}


\newpage

\bibliography{main}

\newpage


\appendix

\section{Proof of \cref{thm:min_max_odd_walk}}

We are going to use the following min-max relation for multiflows due to Karzanov under the same commodity graph constraints as in \cref{thm:karzanov_anticliques}; see~\cite{Kar-94}:

\begin{theorem}
\label{thm:min_max_multicommodity}
Let $(G, T, cap)$ be a network and $H$ be a commodity graph obeying the conditions of \cref{thm:karzanov_anticliques}. Consider a partition $\mathcal{X} := \{X_1, X_2, \dots, X_k\}$ of $T$ into disjoint nonempty sets $X_1, X_2, \ldots, X_k$; call $\mathcal{X}$ \textbf{proper} if each $X_i$ is an independent set of $H$. Define the \textbf{capacity} of $\mathcal{X}$ as $cap(\mathcal{X}) := \frac{1}{2} \sum_{i=1}^k \lambda(X_i, T - X_i)$; also let $Y_i$ ($X_i \subseteq Y_i \subseteq V(G) - (T - X_i)$) be the corresponding minimum cut between $X_i$ and $T - X_i$ in $G$.

Let $\mathcal{F}$ range over multiflows in network $(G, T, cap)$ with commodity graph $H$, and $\mathcal{X}$ range over proper partitions of $T$; then
$$\max_{\mathcal{F}} \val{\mathcal{F}} = \min_{\mathcal{X}} cap(\mathcal{X}).$$
Moreover, the minimum $\mathcal{X}$ can be chosen such that $Y_i$ are pairwise disjoint.
\end{theorem}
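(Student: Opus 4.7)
The plan is to separate the claim into weak duality, strong duality, and an uncrossing step that achieves pairwise disjoint $Y_i$; the strong duality direction is the main technical obstacle.

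For \textbf{weak duality}, fix a multiflow $\mathcal{F}$ and a proper partition $\mathcal{X} = \{X_1, \ldots, X_k\}$ with minimum cuts $Y_1, \ldots, Y_k$. Any walk $W$ in $\mathcal{F}$ of weight $\alpha_W$ has endpoints forming an edge of $H$; since $\mathcal{X}$ is proper (no $X_i$ contains an edge of $H$), these endpoints live in distinct blocks, say $s \in X_i$ and $t \in X_j$ with $i \ne j$. Consequently $W$ traverses at least one edge of $\delta(Y_i)$ and at least one of $\delta(Y_j)$, contributing $\alpha_W$ to the load of each. Summing over walks and then over $i$,
$$2\val{\mathcal{F}} = \sum_W 2\alpha_W \le \sum_{i=1}^k \mathcal{F}(\delta(Y_i)) \le \sum_{i=1}^k cap(\delta(Y_i)) = 2\,cap(\mathcal{X}).$$

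For \textbf{strong duality} I would invoke LP duality. The multiflow LP (maximize $\sum_W \alpha_W$ subject to the edge-load constraints) has as its dual the fractional multicut LP: find $\ell : E(G) \to \RR_+$ minimizing $\sum_e cap(e)\ell(e)$ subject to $\operatorname{dist}_\ell(s, t) \ge 1$ for every $st \in E(H)$. LP duality gives $\max \val{\mathcal{F}} = \min \sum_e cap(e)\ell(e)$, so the task reduces to showing that the optimum $\ell$ can be taken of the half-integer cut form $\ell = \tfrac12 \sum_{i=1}^k \chi_{\delta(Y_i)}$ for a proper partition $\mathcal{X}$. This is the hard part, and it is where the anticlique hypothesis enters: the families $\mathcal{A}_1, \mathcal{A}_2$ induce two partitions $\pi_1, \pi_2$ of $T$ with the property that $st \in E(H)$ iff $s, t$ are separated by both $\pi_1$ and $\pi_2$. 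I would split $\ell = \tfrac12(\ell_1 + \ell_2)$ so that, for each $j \in \{1,2\}$, one has $\operatorname{dist}_{\ell_j}(s,t) \ge 1$ whenever $\pi_j(s) \ne \pi_j(t)$; such a splitting can be obtained by analysing the level sets of $\ell$ and routing the two kinds of separation into $\ell_1$ and $\ell_2$. Each $\ell_j$ is then optimal for an ordinary multi-terminal min-cut LP (with terminal equivalence classes given by $\pi_j$), hence by a standard fact equals a non-negative combination of cut indicators $\chi_{\delta(Y)}$ for sets $Y$ separating a single block of $\pi_j$ from the rest. Collecting these cuts across both $\ell_1$ and $\ell_2$, and rescaling, yields the required proper partition $\mathcal{X}$ with $cap(\mathcal{X}) = \sum_e cap(e)\ell(e) = \max \val{\mathcal{F}}$.

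Finally, \textbf{uncrossing} turns the family $\{Y_1, \ldots, Y_k\}$ into a pairwise disjoint one. Observe that $Y_i \cap T = X_i$, so for $i \ne j$ the overlap $Y_i \cap Y_j$ lies entirely in $V(G) - T$. If $Y_i, Y_j$ intersect, replace them by $Y_i \setminus Y_j$ and $Y_j \setminus Y_i$: each still contains its own $X$-block and avoids the terminals of the other, and posimodularity of $cap(\delta(\cdot))$ gives
$$cap(\delta(Y_i)) + cap(\delta(Y_j)) \ge cap(\delta(Y_i \setminus Y_j)) + cap(\delta(Y_j \setminus Y_i)),$$
so $cap(\mathcal{X})$ cannot increase; by minimality of $Y_i, Y_j$ equality must hold, and the new sets remain minimum cuts for their respective $X$-blocks. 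Iterating over all crossing pairs a polynomial number of times yields a pairwise disjoint family of the same total capacity, which completes the proof.
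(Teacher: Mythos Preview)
The paper does not actually prove this theorem: it is quoted verbatim from Karzanov~\cite{Kar-94} and used as a black box in the appendix. So there is no ``paper's own proof'' to compare against; what can be assessed is whether your sketch stands on its own.

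Your weak-duality paragraph and your uncrossing paragraph are both fine. In particular, the posimodularity step is correct: since $Y_i \cap T = X_i$ and the $X_i$ are disjoint, $Y_i \setminus Y_j$ remains an $X_i$--$(T-X_i)$ cut, and minimality of $Y_i,Y_j$ forces equality in the posimodular inequality, so the replacements are again minimum cuts; the potential $\sum_i |Y_i|$ drops, giving termination.

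The strong-duality part, however, is only a plan, and the plan has a real hole. The entire content of Karzanov's theorem is hidden in the sentence ``such a splitting can be obtained by analysing the level sets of $\ell$ and routing the two kinds of separation into $\ell_1$ and $\ell_2$.'' You have not shown that an optimal dual metric $\ell$ admits a decomposition $\ell = \tfrac12(\ell_1+\ell_2)$ with $\operatorname{dist}_{\ell_j}(s,t)\ge 1$ whenever $\pi_j(s)\ne\pi_j(t)$; this is precisely the non-trivial structural fact that the two-anticlique-family hypothesis buys, and it does not follow from generic LP or level-set reasoning. Moreover, even granting the splitting, your last step (``collecting these cuts \ldots\ yields the required proper partition $\mathcal{X}$'') is a non sequitur: decomposing each $\ell_j$ as a non-negative combination of cut indicators gives a \emph{fractional} mixture of partitions, not a single proper partition $\mathcal{X}$ with $cap(\mathcal{X})=\sum_e cap(e)\ell(e)$. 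You would still need an integrality/vertex argument to extract one partition achieving the bound. In short, the outline is plausible but the two load-bearing steps are exactly the ones Karzanov's paper supplies, and they are not reproduced here.
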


Let us rewrite the capacity of an odd $T$-walk barrier as follows.
\begin{definition}
Let $H$ be a (not necessarily induced) subgraph of $G$. Define function $S[H]: E(G) \rightarrow \{0, \frac{1}{2}, 1\}$ called a \textbf{slice of $H$} by $S[H](e) := 1$ for $e \in U(H)$, $S[H](e) := \frac{1}{2}$ for $e \in I(H)$, and $0$ otherwise.
\end{definition}
A similar notion of slices (differing by a factor of~2) earlier appeared in \cite{SS-94}. Now for an odd $T$-walk barrier $cap(B) = cap \bigcdot S[B]$, where $\bigcdot$ stands for the scalar product.

\begin{lemma}
Let $H$ be a (not necessarily induced) subgraph of $G$ and $\mathcal{C}$ be the family of connected components of $H$. Then $S[H] = \sum_{C \in \mathcal{C}} S[C]$ (regarded as functions on $E(G)$).
\end{lemma}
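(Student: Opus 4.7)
The plan is a direct case analysis on an edge $e = xy \in E(G)$, keyed on which components of $H$ (if any) contain its endpoints. The crucial preliminary observation is that the connected components $\mathcal{C}$ partition $V(H)$, so each endpoint lying in $V(H)$ belongs to a unique component; moreover $E(H) = \bigsqcup_{C \in \mathcal{C}} E(C)$, since no edge of $H$ can join distinct components. I would fix $e$ and verify the equality $S[H](e) = \sum_{C \in \mathcal{C}} S[C](e)$ pointwise.

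First I would dispose of the trivial case where neither $x$ nor $y$ lies in $V(H)$: then $e \notin I(H) \cup U(H)$, so $S[H](e) = 0$, and for every $C \in \mathcal{C}$ we also have $S[C](e) = 0$. Next, suppose exactly one endpoint, say $x$, lies in $V(H)$. Then $e \in I(H)$, so $S[H](e) = \tfrac{1}{2}$; let $C_x \in \mathcal{C}$ be the component containing $x$. Then $e \in I(C_x)$, contributing $\tfrac{1}{2}$, while every other component contains neither endpoint and contributes $0$.

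The remaining cases are when both $x, y \in V(H)$. If $x$ and $y$ lie in the same component $C^\star$, then either $e \in E(C^\star) = E(H)$, in which case both sides vanish, or $e \notin E(H)$, in which case $e \in U(H)$ and $e \in U(C^\star)$, giving $1$ on both sides (other components contributing $0$). If $x$ and $y$ lie in distinct components $C_x \ne C_y$, then $e$ cannot belong to $E(H)$ since the $E(C)$'s are disjoint and no component spans both endpoints; hence $e \in U(H)$ and $S[H](e) = 1$. On the right-hand side, $e$ has exactly one endpoint in each of $V(C_x)$ and $V(C_y)$, so $S[C_x](e) = S[C_y](e) = \tfrac{1}{2}$, while every other component contributes~$0$; the sum is again~$1$.

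The main step is simply organising the casework so that the distinction between ``same component'' and ``different components'' (when both endpoints are in $V(H)$) is handled cleanly; there is no real obstacle, only a bookkeeping check that $I(C)$ picks up each such cross-component edge twice (once from each side) while $U(C)$ picks up within-component non-edges once. Summing over $e \in E(G)$ then yields the stated functional identity.
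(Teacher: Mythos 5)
Your proof is correct and follows essentially the same route as the paper's: a pointwise case analysis on each edge $e = xy$, distinguishing whether zero, one, or both endpoints lie in $V(H)$ and, in the latter case, whether they lie in the same or in distinct components (your organisation by endpoint location versus the paper's organisation by the value of $S[H](e)$ is only a cosmetic difference, and you even spell out the $e \in E(H)$ subcase that the paper folds into ``otherwise''). Only a minor slip of wording: write $e \in E(C^\star) \subseteq E(H)$ rather than $E(C^\star) = E(H)$, and the closing ``summing over $e$'' is unnecessary since the pointwise verification already gives the functional identity.
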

\begin{proof}
If $S[H](xy) = 1$, either $x$ and $y$ belong to the same connected component $C_1$, in which case $S[C_1](xy) = 1$ and $S[C_2](xy) = 0$ for any other $C_2 \in \mathcal{C}$, $C_2 \neq C_1$, or to two distinct connected components $C_1, C_2 \in \mathcal{C}$, in which case $S[C_1](xy) + S[C_2](xy) = \frac{1}{2} + \frac{1}{2} = 1$ and $S[C_3](xy) = 0$ for any other $C_3 \in \mathcal{C}$, $C_3 \neq C_1, C_2$.

If $S[H](xy) = \frac{1}{2}$, let $C_1$ be the connected component containing one of $xy$'s endpoints; in this case $S[C_1](xy) = \frac{1}{2}$ and $S[C_2](xy) = 0$ for $C_2 \in \mathcal{C}$, $C_2 \neq C_1$.

Otherwise $S[H](xy) = 0$ and $S[C_1](xy) = 0$ for any $C_1 \in \mathcal{C}$.
\end{proof}

\begin{lemma}
\label{lem:min_max_leq}
Let $\mathcal{X}$ range over proper partitions of $\wt{T}$ and $B$ be fixed odd $T$-walk barrier, then
$$\min_{\mathcal{X}} cap(\mathcal{X}) \leq cap(B).$$
\end{lemma}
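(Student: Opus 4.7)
The plan is to lift the barrier $B$ into the bipartite double cover $\wt{G}$ and read a proper partition $\mathcal{X}$ of $\wt{T}$ off of the connected components of the lift. Define $\wt{B}$ to be the subgraph of $\wt{G}$ with vertex set $V(B) \sqcup V(B)'$ and edge set $\{xy', x'y \mid xy \in E(B)\}$. Since any barrier satisfies $T \subseteq V(B)$, we have $\wt{T} \subseteq V(\wt{B})$, so every terminal belongs to some connected component of $\wt{B}$.

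For each connected component $\wt{C}$ of $\wt{B}$, set $X_{\wt{C}} := \wt{T} \cap V(\wt{C})$, and let $\mathcal{X}$ be the collection of those $X_{\wt{C}}$ that are non-empty. I would first verify that each $X_{\wt{C}}$ is independent in $H_T$: since $E(H_T) = \{t_i t_j' \mid i \neq j\}$, if both $t_i$ and $t_j'$ with $i \neq j$ lay in the same $V(\wt{C})$, then a walk between them in $\wt{B}$ (necessarily of odd length, as $\wt{G}$ is bipartite with sides $V(G)$ and $V(G)'$) would project to an odd $T$-walk between $t_i$ and $t_j$ in $B$, contradicting the barrier property. Thus $\mathcal{X}$ is a proper partition of $\wt{T}$, and since $V(\wt{C})$ contains $X_{\wt{C}}$ and avoids $\wt{T} - X_{\wt{C}}$, it is a valid $X_{\wt{C}}$-to-$(\wt{T} - X_{\wt{C}})$ cut in $\wt{G}$, giving $\lambda_{\wt{G}}(X_{\wt{C}}, \wt{T} - X_{\wt{C}}) \le \wt{cap}(\delta_{\wt{G}}(V(\wt{C})))$.

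The remaining task is the inequality $\sum_{\wt{C}} \wt{cap}(\delta_{\wt{G}}(V(\wt{C}))) \le cap(I(B)) + 2\,cap(U(B))$, which I would establish by edge-classification in $G$. If $xy \in E(B)$, both lifts $xy'$ and $x'y$ lie inside a single component of $\wt{B}$ and contribute $0$. If $xy \in I(B)$, say with $x \in V(B)$ and $y \notin V(B)$, then each of $xy'$ and $x'y$ has exactly one endpoint in $V(\wt{B})$ and contributes $\wt{cap} = \tfrac{1}{2}cap(xy)$ once, for a joint total of $cap(xy)$. If $xy \in U(B)$, both lifts have both endpoints in $V(\wt{B})$ and each crosses a component boundary at most once, contributing at most $2\wt{cap} = cap(xy)$ in that case, for a joint total of at most $2\,cap(xy)$. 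Edges with both endpoints outside $V(B)$ contribute nothing. Halving yields $cap(\mathcal{X}) \le \tfrac{1}{2} cap(I(B)) + cap(U(B)) = cap(B)$. The main subtlety is this factor-of-two bookkeeping: the asymmetry between $I(B)$-edges (single contribution) and $U(B)$-edges (double contribution) is precisely what matches the coefficients $\tfrac{1}{2}$ and $1$ in the definition of $cap(B)$; the rest of the argument is routine component-counting.
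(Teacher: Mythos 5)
Your proof is correct, and it reaches the same destination as the paper's by a noticeably cleaner route. The paper works with the connected components of $B$ inside $G$: it classifies them as redundant, singular, or bipartite (arguing that a multi-terminal component must be bipartite with all its terminals on one side), builds cuts $C \cup C'$ resp.\ $L \sqcup R'$, $L' \sqcup R$ in $\wt{G}$ for each type, and sums per-component bounds using a ``slice'' decomposition $S[B] = \sum_C S[C]$. You instead lift $B$ to $\wt{B} \subseteq \wt{G}$ and take connected components there; the independence of each part then falls out of a single parity argument (an odd walk in the bipartite double cover projects to an odd $T$-walk in $B$), and the capacity bound becomes one uniform edge classification with the factor-of-two bookkeeping for $U(B)$ handled globally rather than per component type. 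In effect your components of $\wt{B}$ are exactly the paper's cuts (a non-bipartite singular or redundant component lifts to one self-symmetric component, a bipartite multi-terminal component splits into the symmetric pair $L \sqcup R'$, $L' \sqcup R$), except that a \emph{bipartite singular} component yields two parts $\{t\}$, $\{t'\}$ where the paper uses $\{t,t'\}$ --- harmless, since the theorem being invoked only needs each part to be an independent set of $H_T$, not a maximal anticlique. Two phrasings deserve tightening but do not affect correctness: for $xy \in E(B)$ the two lifts $xy'$, $x'y$ may lie in two \emph{different} components of $\wt{B}$, though each lies wholly inside its own component, so the contribution is still zero; and for $xy \in U(B)$ a lift whose endpoints lie in distinct components is counted in the boundary of \emph{both} components, which is exactly the $2\wt{cap}(xy') = cap(xy)$ you charge, matching the coefficient $1$ on $cap(U(B))$ after halving.
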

\begin{proof}

Consider some barrier $B$ and let $\mathcal{C}$ denote the family of connected components of~$B$. We construct a proper partition $\mathcal{X}$ of $\wt{T}$ and a corresponding family of cuts separating $X$ and $\wt{T} - X$ for any $X \in \mathcal{X}$. Moreover, the capacities of these cuts are bounded by corresponding summands in $\sum_{C \in \mathcal{C}} cap \bigcdot S[C]$.

Call a component $C \in \mathcal{C}$ \textbf{redundant} if it contains no terminals from $T$ and \textbf{singular} if it contains a single terminal. Otherwise $C$ must be \textbf{bipartite} (regarded as a graph) and all terminals must belong to the same part of bipartition so that there is no odd $T$-walk within this component. Construct a proper partition as follows.

If $C$ is singular containing just terminal $t \in T$, enclose $t$ with its symmetric vertex $t'$ with set $X := \{t, t'\}$, which is a maximal anticlique in $H_T$. Note that $Y := C \cup C'$ is a cut between $X$ and $\wt{T} - X$; edges of $\delta(Y)$ in $\wt{G}$ correspond to edges of $I(C)$ in $G$. Thus
\begin{multline}
\label{eqn:ineq_singular}
\frac{1}{2} \wt{cap}(\delta(Y)) = \frac{1}{2} \sum\limits_{\mathclap{xy \in \delta(Y)}} \wt{cap}(xy) = \frac{1}{2}\sum\limits_{\mathclap{xy \in I(C)}}\left(\wt{cap}(xy') + \wt{cap}(x'y)\right) = \\
= \frac{1}{2} \sum\limits_{\mathclap{xy \in I(C)}}cap(xy) = \frac{1}{2} cap(I(C)) \leq cap \bigcdot S[C].
\end{multline}

If $C$ is bipartite containing multiple terminals $t_1, \dots, t_k$, introduce sets $X := \{t_1, \dots, t_k\}$ and $X' := \{t'_1, \dots, t'_k\}$, which are subsets of maximal anticliques $T$ and $T'$ in $H_T$, respectively. Let $L$ and $R$ be the bi-partition parts of $C$ such that $t_1, \dots, t_k \in L$. Note that $Y := L \sqcup R'$ is a cut between $X$ and $\wt{T} - X$ and, symmetrically, $Y' = L' \sqcup R$ is a cut between $X'$ and $\wt{T} - X'$.

There are two kinds of edges in $\delta(Y) \cup \delta(Y')$ in $\wt{G}$: the ones connecting $Y$ or $Y'$ with $V(G) - (Y \sqcup Y')$, and the ones connecting $Y$ with $Y'$. Again, the former ones correspond to edges in $I(C)$, while the latter ones are edges connecting $L$ with $L'$ or $R$ with $R'$; therefore their pre-images belong to $\gamma(L)$ or $\gamma(R)$, hence they belong to $U(C)$. Using these observations, we get the following:
\begin{multline}
\label{eqn:ineq_bipartite}
\frac{1}{2}\left(\wt{cap}(\delta(Y)) + \wt{cap}(\delta(Y'))\right) = \frac{1}{2}\sum\limits_{\mathclap{xy \in \delta(Y \sqcup Y')}} \wt{cap}(xy) + \sum\limits_{\mathclap{x \in Y,\,y \in Y'}} \wt{cap}(xy) \leq \\
\leq \frac{1}{2} \sum\limits_{\mathclap{xy \in I(C)}} (\wt{cap}(xy') + \wt{cap}(x'y)) + \sum\limits_{\mathclap{xy \in U(C)}} (\wt{cap}(xy') + \wt{cap}(x'y)) = \\
= \frac{1}{2} \sum\limits_{\mathclap{xy \in I(C)}} cap(xy) + \sum\limits_{\mathclap{xy \in U(C)}} cap(xy) = \frac{1}{2} cap(I(C)) + cap(U(C)) = cap \bigcdot S[C].
\end{multline}

Finally, if $C$ is redundant, it does not produce any set for our proper partition. Note that each vertex $v \in \wt{T}$ belongs to exactly one of the formed sets in our partition.

Summing \cref{eqn:ineq_singular} and \cref{eqn:ineq_bipartite} over all singular and bipartite components, we get $cap(\mathcal{X}) \leq cap \bigcdot S[B] = cap(B)$, which completes the proof.
\end{proof}

\begin{figure}
\centering
\begin{subfigure}{\textwidth}
\centering
\includegraphics{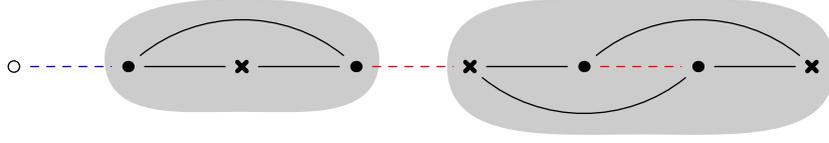}
\caption{Odd $T$-walk barrier $B$ in $(G, T, c)$: left connected component is singular; right connected component is non-singular; blue and red edges are $I(B)$ and $U(B)$, respectively.}
\end{subfigure}
\begin{subfigure}{\textwidth}
\centering
\includegraphics{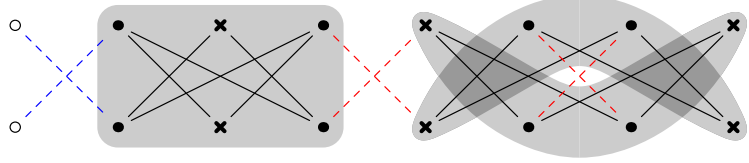}
\caption{Proper partition $\mathcal{X}$ in $(\wt{G}, \wt{T}, \wt{c})$: left set is singular; two right sets are non-singular and symmetric; blue and red edges are accounted in $cap(\mathcal{X})$ with weight $\frac{1}{2}$ and 1, respectively.}
\end{subfigure}
\caption{Odd $T$-walk barrier $B$ and its corresponding proper partition $\mathcal{X}$.}
\end{figure}

\begin{lemma}
There exists a proper partition $\mathcal{X}$ with minimum $cap(\mathcal{X})$ such that no two $X_1, X_2 \in \mathcal{X}$ are subsets of the same maximal anticlique in $H_T$.
\end{lemma}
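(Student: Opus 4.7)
The plan is to begin with any minimum-capacity proper partition $\mathcal{X}$ of $\wt{T}$ and iteratively normalize it via capacity-nonincreasing mergers. Recall from the proof of \cref{thm:odd_walk_packing} that the maximal anticliques of $H_T$ are exactly $T$, $T'$, and the pairs $\{t, t'\}$ for $t \in T$. Since the parts of a partition are disjoint and nonempty, two distinct parts can lie in a common maximal anticlique only in one of three configurations: (a) both are subsets of $T$, (b) both are subsets of $T'$, or (c) one is $\{t\}$ and the other is $\{t'\}$ for the same $t$ (both therefore sitting inside $\{t, t'\}$). The plan is to eliminate each configuration in turn.

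The key tool will be a Merging Lemma: whenever $X_1, X_2$ are two distinct parts of a proper partition and their union $X := X_1 \cup X_2$ is independent in $H_T$, replacing $X_1, X_2$ by $X$ yields another proper partition of no greater capacity. To prove this, I will pick a minimum cut $Y_i$ satisfying $X_i \subseteq Y_i \subseteq V(\wt{G}) - (\wt{T} - X_i)$ for $i = 1, 2$; the set $Y := Y_1 \cup Y_2$ then meets $\wt{T}$ in exactly $X$, so it separates $X$ from $\wt{T} - X$. Submodularity and nonnegativity of the cut function $Y \mapsto \wt{cap}(\delta(Y))$ give $\wt{cap}(\delta(Y_1 \cup Y_2)) \leq \wt{cap}(\delta(Y_1)) + \wt{cap}(\delta(Y_2))$, whence $\lambda(X, \wt{T} - X) \leq \lambda(X_1, \wt{T} - X_1) + \lambda(X_2, \wt{T} - X_2)$, establishing the claim.

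I will then apply the Merging Lemma in three stages. First, repeatedly merge any two parts contained in $T$; their union stays within $T$, hence remains independent in $H_T$. This eliminates (a), leaving at most one part $P \subseteq T$. Second, perform the symmetric procedure for $T'$, leaving at most one part $P' \subseteq T'$ and eliminating (b). Third, if we end up with $P = \{t\}$ and $P' = \{t'\}$ for the same $t$, merge them into $\{t, t'\}$, which is independent in $H_T$; this kills (c). Since each merger preserves minimum capacity, the final partition is still a minimum. A brief case check (using the explicit list of maximal anticliques together with disjointness of partition parts) then confirms that no two remaining parts share a maximal anticlique: a part $\{t, t'\}$ cannot coexist in an anticlique with a part $\subseteq T$ or $\subseteq T'$, because the only maximal anticlique containing $\{t, t'\}$ is itself, and any partition part contained in it would fail to be disjoint from $\{t, t'\}$.

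The main obstacle will be proving the Merging Lemma, specifically verifying that $Y_1 \cup Y_2$ qualifies as an $X$-to-$(\wt{T} - X)$ cut in the sense required by $\lambda(X, \wt{T} - X)$; once that is in place, submodularity of the cut function does the rest, and the stage-by-stage application of the lemma is routine bookkeeping.
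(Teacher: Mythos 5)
Your proposal is correct and follows essentially the same route as the paper: take minimum cuts $Y_1, Y_2$ for two parts lying in a common maximal anticlique, use submodularity together with nonnegativity of $\wt{cap}(\delta(\cdot))$ to show that merging the parts does not increase $cap(\mathcal{X})$, and iterate. Your explicit enumeration of the maximal anticliques of $H_T$ and the three-stage bookkeeping is just a more detailed spelling-out of the paper's terse ``repeat merging parts until done.''
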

\begin{proof}
Construct another proper partition $\mathcal{Z}$ by replacing $X_1$ and $X_2$ with $X_1 \sqcup X_2$. The change in capacity is $cap(\mathcal{Z}) - cap(\mathcal{X}) = \lambda(X_1 \sqcup X_2, \wt{T} - (X_1 \sqcup X_2)) - \lambda(X_1, \wt{T} - X_1) - \lambda(X_2, \wt{T} - X_2)$.

Let $Y_i$ be a minimum cut between $X_i$ and $\wt{T} - X_i$ for $i = 1,2$. Submodularity of cut capacities \cite[Sec.~44.1a]{Sch-03} implies $\lambda(X_1, \wt{T} - X_1) + \lambda(X_2, \wt{T} - X_2) = cap(\delta(Y_1)) + cap(\delta(Y_2)) \geq cap(\delta(Y_1 \cap Y_2)) + cap(\delta(Y_1 \cup Y_2)) \geq 0 + \lambda(X_1 \sqcup X_2, \wt{T} - (X_1 \sqcup X_2))$. Therefore, $cap(\mathcal{Z}) \leq cap(\mathcal{X})$. Repeat merging parts until done.
\end{proof}

From the previous lemma trivially follows the following corollary.

\begin{corollary}
There exists a proper partition $\mathcal{X}$ with minimum $cap(\mathcal{X})$ such that all of its sets are of the form $\{t, t'\}$ for $t \in T$ (call them \textbf{singular}) except for, possibly, two symmetric sets $X, X' \in \mathcal{X}$ (call them \textbf{non-singular}).
\end{corollary}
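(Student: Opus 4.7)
The plan is to take a minimum proper partition $\mathcal{X}$ supplied by the previous lemma (so that no two parts of $\mathcal{X}$ lie in the same maximal anticlique of $H_T$) and simply read off its structure. The only real work is a bookkeeping argument about what the independent sets of $H_T$ look like.

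First, I would classify the independent sets of $H_T$. Recall that $H_T$ has $V(H_T) = T \sqcup T'$ and edges $\{t_i t_j' : i \ne j\}$, so in particular $t_i$ and $t_j'$ are adjacent whenever $i \ne j$. Hence any independent set $X$ in $H_T$ must fall into exactly one of three types: (a) $X \subseteq T$, (b) $X \subseteq T'$, or (c) $X = \{t, t'\}$ for some single $t \in T$. Correspondingly, the maximal anticliques of $H_T$ are precisely $T$, $T'$, and the $|T|$ pairs $\{t, t'\}$. I would call type-(c) parts \emph{singular} and types (a)/(b) parts \emph{non-singular}.

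Second, I would exploit the lemma's constraint. Since all type-(a) parts are subsets of the single maximal anticlique $T$, there can be at most one such part in $\mathcal{X}$; symmetrically, at most one type-(b) part. So $\mathcal{X}$ consists of some collection of singular parts $\{t_{i_1}, t_{i_1}'\}, \ldots, \{t_{i_k}, t_{i_k}'\}$ together with at most one non-singular part $X \subseteq T$ and at most one non-singular part $Y \subseteq T'$.

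Finally, I would show that the non-singular parts (if any) come as a symmetric pair. Writing $S := \{t_{i_1}, \ldots, t_{i_k}\}$, the singular parts cover exactly $S \sqcup S'$. Since $\mathcal{X}$ partitions $\widetilde{T}$, the type-(a) part must equal $X = T \setminus S$ and the type-(b) part must equal $Y = T' \setminus S' = (T \setminus S)' = X'$. In particular, if one of $X, Y$ is non-empty (and therefore actually present in $\mathcal{X}$), so is the other, and they are symmetric; if both are empty then all parts of $\mathcal{X}$ are singular. Either way, $\mathcal{X}$ has the form claimed in the corollary. The argument is purely combinatorial, and the only mild subtlety is handling the degenerate case $X = \emptyset$, where no non-singular part exists at all.
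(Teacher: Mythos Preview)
Your argument is correct and is precisely the ``trivial'' derivation the paper has in mind: the paper simply states that the corollary follows trivially from the previous lemma, and you have spelled out the only possible way to do this---classify the independent sets of $H_T$, use the lemma to bound the number of parts contained in $T$ and in $T'$ by one each, and observe that the remaining elements force these two parts to be symmetric.
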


\begin{lemma}
There exists a proper partition $\mathcal{X}$ with minimum $cap(\mathcal{X})$ such that the associated minimum cuts between $X$ and $\wt{T} - X$ for $X \in \mathcal{X}$ obey the following properties:
\begin{itemize}
\item if $X$ is singular, the corresponding cut $Y$ is self-symmetric, i.e. $Y' = Y$;
\item if $X$ and $X'$ are non-singular, their corresponding cuts are also symmetric to each other;
\item all the above cuts are disjoint.
\end{itemize}
\end{lemma}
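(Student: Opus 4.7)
The plan is to start from a minimum proper partition whose cuts are already pairwise disjoint (as guaranteed by Theorem~\ref{thm:min_max_multicommodity}) and then symmetrize each cut individually via submodularity, always replacing a cut by a \emph{subset} of itself so that disjointness survives for free. The key structural observation driving everything is that the involution $S \mapsto S'$ on $V(\wt{G})$ preserves $\wt{cap}$: whenever $Y$ is a minimum $Z$-to-$(\wt{T} - Z)$ cut, the set $Y'$ is a minimum $Z'$-to-$(\wt{T} - Z')$ cut of equal capacity.

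First, I would invoke the ``moreover'' clause of Theorem~\ref{thm:min_max_multicommodity} together with the preceding corollary to fix a minimum proper partition $\mathcal{X}$ consisting of singular sets $\{t, t'\}$ and, possibly, one non-singular symmetric pair $X, X'$, equipped with pairwise disjoint minimum cuts $Y_X$.

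For a singular part $X_i = \{t, t'\}$ (self-symmetric), the set $Y_i'$ is another minimum cut for $X_i$. Submodularity of cut capacities gives
\[
\wt{cap}(\delta(Y_i \cap Y_i')) + \wt{cap}(\delta(Y_i \cup Y_i')) \le \wt{cap}(\delta(Y_i)) + \wt{cap}(\delta(Y_i')) = 2 \lambda(X_i, \wt{T} - X_i),
\]
and both $Y_i \cap Y_i'$ and $Y_i \cup Y_i'$ are self-symmetric valid $X_i$-cuts, so each already achieves the minimum. Replace $Y_i$ with $Y_i \cap Y_i'$: self-symmetric, minimum, and contained in the original, so disjointness with every other $Y_j$ is preserved automatically.

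For the non-singular pair $X, X'$ with cuts $Y, \bar Y$, the set $\bar Y'$ is a minimum cut for $X$ (since $(X')' = X$), and the same uncrossing yields that $Y \cap \bar Y'$ is a minimum cut for $X$; one quickly checks $X \subseteq Y \cap \bar Y'$ (using $X = (X')' \subseteq \bar Y'$) and $(\wt{T} - X) \cap (Y \cap \bar Y') = \emptyset$ (using $(\wt{T} - X) \cap Y = \emptyset$). Replace $Y$ with $Y^{\mathrm{new}} := Y \cap \bar Y'$ and $\bar Y$ with $\bar Y^{\mathrm{new}} := (Y^{\mathrm{new}})' = Y' \cap \bar Y$; by construction they are symmetric to each other, each sits inside the corresponding original cut (so disjointness from every other $Y_j$ survives), and $Y^{\mathrm{new}} \cap \bar Y^{\mathrm{new}} \subseteq Y \cap \bar Y = \emptyset$. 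The only mildly delicate point is making all three desiderata -- symmetry, minimality, and pairwise disjointness -- coexist; this is exactly why we consistently take intersections rather than unions when uncrossing.
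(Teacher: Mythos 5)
Your proposal is correct and follows essentially the same route as the paper's own proof: start from the disjoint cuts guaranteed by the ``moreover'' clause of the multiflow min-max theorem, uncross each cut with its symmetric image (resp.\ the image of its partner's cut) via submodularity, and keep the intersection so that symmetry is gained while disjointness is inherited from the subset relation. Your explicit checks that the intersections and unions remain valid cuts for the right terminal sets only spell out details the paper leaves implicit.
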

\begin{proof}
Using \cref{thm:min_max_multicommodity}, we may choose $\mathcal{X}$ such that the corresponding cuts $Y$, $Y'$ are disjoint.

If $Y$ is a minimum cut between $\{t, t'\}$ and $\wt{T} - \{t, t'\}$ for some $t \in T$, then so is $Y'$. Since by submodularity $cap(\delta(Y)) + cap(\delta(Y')) \geq cap(\delta(Y \cap Y')) + cap(\delta(Y \cup Y'))$, it follows that $Y \cap Y'$ is also a minimum cut between $\{t, t'\}$ and $\wt{T} - \{t, t'\}$; also $Y \cup Y'$ is self-symmetric by construction. Replace $Y$ with $Y \cap Y'$.

The second part of the statement is similar. If $X$ and $X'$ are non-singular and $Y_1$ (resp.~$Y_2$) is a minimum cut between $X$ and $\wt{T} - X$ (resp. $X'$ and $\wt{T} - X'$), then $Y'_2$ and (resp. $Y'_1$) is also a minimum cut for the same vertex sets. Using a similar argument, replace $Y_1$ and $Y_2$ with $Y_1 \cap Y'_2$ and $Y_2 \cap Y'_1$, which are also minimum cuts symmetric to each other.

The steps above replace cuts with their subsets; therefore the last property is preserved.
\end{proof}

\begin{lemma}
\label{lem:min_max_geq}
Let $\mathcal{X}$ range over proper partitions of $T$ and $B$ range over odd $T$-walk barriers, then
$$\min\limits_{\mathcal{X}} cap(\mathcal{X}) \geq \min\limits_{B} cap(B).$$
\end{lemma}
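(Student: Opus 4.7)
The plan is to take an optimal proper partition $\mathcal{X}$ of $\wt{T}$ satisfying the structural constraints established in the preceding lemmas, namely: every set is either singular $\{t, t'\}$ (with a self-symmetric minimum cut $Y_i = C_i \sqcup C_i'$ where $C_i \subseteq V(G)$) or part of a single non-singular symmetric pair $X \subseteq T$, $X' \subseteq T'$ (with symmetric minimum cuts $Y_X = L \sqcup R'$ and $Y_{X'} = L' \sqcup R$), and all these cuts are pairwise disjoint. From $\mathcal{X}$, I will build an odd $T$-walk barrier $B$ satisfying $cap(B) \le cap(\mathcal{X})$, which immediately gives the required inequality.

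The construction of $B$: for each singular set, add $C_i$ to $V(B)$ and include every edge of $\gamma_G(V(C_i))$ in $E(B)$; for the non-singular pair, add $L \cup R$ to $V(B)$ and include as edges of $B$ exactly those edges of $G$ that go between $L$ and $R$. The disjointness of the cuts $Y_i$ projects down to pairwise disjointness of the sets $V(C_i)$ and $L, R$ (in particular $L \cap R = \varnothing$ follows from $Y_X \cap Y_{X'} = \varnothing$), so the vertex sets assigned to different partition parts do not overlap. Every terminal belongs to some set of the partition, hence $T \subseteq V(B)$.

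Next I need to check that $B$ has no odd $T$-walk. In a singular piece only one terminal is present, so no walk there connects two distinct terminals. For the non-singular piece, the cut constraint $Y_X \subseteq V(\wt{G}) - (\wt{T} - X)$ forces $R' \cap T' = \varnothing$, i.e. $R \cap T = \varnothing$, while $L \cap T = X$; thus all terminals of the component lie in $L$, and since the chosen edges run only between $L$ and $R$, the whole component is bipartite. Any $T$-walk inside it therefore has even length. Walks crossing different pieces are impossible because $E(B)$ never contains edges between distinct pieces.

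It remains to show $cap(B) \le cap(\mathcal{X})$, which I will verify piece by piece using the slice decomposition $S[B] = \sum_C S[C]$ (extended in the obvious way to our vertex-set partition, since each edge of $E(B)$ lies inside one piece). For a singular piece, $U(C_i) = \varnothing$ and a short check shows that each edge $xy \in I(C_i)$ contributes $\wt{cap}(xy') + \wt{cap}(x'y) = cap(xy)$ to $\wt{cap}(\delta(Y_i))$, giving $\tfrac{1}{2}cap(I(C_i)) = \tfrac{1}{2}\wt{cap}(\delta(Y_i))$. For the non-singular piece, the main obstacle is a careful edge-by-edge case analysis: each $xy \in I(C)$ contributes $\tfrac{1}{2}cap(xy)$ to exactly one of $\wt{cap}(\delta(Y_X))$, $\wt{cap}(\delta(Y_{X'}))$ and the symmetric edge contributes $\tfrac{1}{2}cap(xy)$ to the other, while each $xy \in U(C)$ (edges with both ends in $L$ or both in $R$) contributes $cap(xy)$ to \emph{each} of the two cut capacities. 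Summing gives $\wt{cap}(\delta(Y_X)) + \wt{cap}(\delta(Y_{X'})) = cap(I(C)) + 2\,cap(U(C))$, so the non-singular contribution to $cap(\mathcal{X})$ equals $\tfrac{1}{2}cap(I(C)) + cap(U(C)) = cap(C)$. Adding up all contributions yields $cap(B) = cap(\mathcal{X})$, completing the proof. The hardest step is this bookkeeping for the non-singular pair; the rest of the argument is essentially the reverse of what was done in \cref{lem:min_max_leq}.
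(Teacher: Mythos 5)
Your proposal is correct and takes essentially the same route as the paper's proof: you start from the structured optimal partition with symmetric, disjoint minimum cuts, form the barrier from their pre-images (induced subgraphs for singular parts, the bipartite subgraph between $L$ and $R$ for the non-singular pair), and match $cap(B)$ against $cap(\mathcal{X})$ by the same edge-by-edge accounting and slice additivity. The only difference is presentational: you spell out explicitly the verification that $B$ contains no odd $T$-walk and the final summation, which the paper leaves implicit.
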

\begin{proof}
Pick $\mathcal{X}$ with the minimum capacity $cap(\mathcal{X})$ satisfying the properties from the previous lemma.

If $X = \{t, t'\} \in \mathcal{X}$ is singular and $Y$ is the corresponding cut, consider the pre-image $C \subseteq V(G)$ of $Y$. Add the subgraph of $G$ induced by $C$ to barrier $B$. Note that $U(C) = \varnothing$. Now

\begin{multline}
\label{eqn:ineq_singular_1}
cap \bigcdot S[C] = \frac{1}{2} cap(I(C)) = \frac{1}{2} \sum\limits_{xy \in I(C)}cap(xy) = \\
= \frac{1}{2} \sum\limits_{xy \in I(C)}(\wt{cap}(xy') + \wt{cap}(x'y)) = \frac{1}{2} \wt{cap}(\delta(Y)).
\end{multline}

If $X, X' \in \mathcal{X}$ are non-singular, consider their corresponding cuts $Y$ and $Y'$. Let $Y = L \sqcup R'$ for $L, R \subseteq V(G)$ (which implies $Y' = L' \sqcup R$). Add the bipartite subgraph $D$ of $G$ induced by $L$ and $R$ to barrier $B$. Note that $U(D)$ is the pre-image of edges $xy'$ with $x \in Y$ and $y' \in Y'$. Therefore

\begin{multline}
\label{eqn:ineq_bipartite_2}
cap \bigcdot S[D] = cap(U(D)) + \frac{1}{2}cap(I(D)) = \sum\limits_{xy \in U(D)}cap(xy) + \frac{1}{2}\sum\limits_{xy \in I(D)}cap(xy) = \\
= 2 \cdot \frac{1}{2} \sum\limits_{xy \in U(D)}(\wt{cap}(xy') + \wt{cap}(x'y)) + \frac{1}{2} \sum\limits_{xy \in I(D)}(\wt{cap}(xy') + \wt{cap}(x'y)) = \\
= \frac{1}{2}(\wt{cap}(\delta(Y)) + \wt{cap}(\delta(Y'))).
\end{multline}

\end{proof}

\begin{proof}[Proof of \cref{thm:min_max_odd_walk}]
From \cref{thm:odd_walk_packing} we know that $\max_{\mathcal{P}} \val{\mathcal{P}} = \max_{\mathcal{Q}} \val{\mathcal{Q}}$, where $\mathcal{P}$ ranges over odd $T$-walk packings and $\mathcal{Q}$ ranges over multiflows in $(\wt{G}, \wt{T}, \wt{cap})$ with commodity graph $H_T$. Then, from \cref{thm:min_max_multicommodity} it follows that $\max_{\mathcal{Q}} \val{\mathcal{Q}} = \min_{\mathcal{X}} cap(\mathcal{X})$, where $\mathcal{X}$ ranges over proper partitions of $T$ in $H_T$. Finally, from \cref{lem:min_max_leq} and \cref{lem:min_max_geq} it follows that $\min_{\mathcal{X}} cap(\mathcal{X}) = \min_{B} cap(B)$.
\end{proof}


\end{document}